\providecommand{\U}[1]{\protect\rule{.1in}{.1in}}
\providecommand{\U}[1]{\protect\rule{.1in}{.1in}}
\providecommand{\U}[1]{\protect\rule{.1in}{.1in}}
\newtheorem{theorem}{Theorem}
\newtheorem{algorithm}[theorem]{Algorithm}
\newtheorem{condition}[theorem]{Condition}
\numberwithin{equation}{section}
\numberwithin{theorem}{section}
\newtheorem{definition}[theorem]{Definition}
\newtheorem{lemma}[theorem]{Lemma}
\newtheorem{remark}[theorem]{Remark}
\newenvironment{proof}[1][Proof]{\textbf{#1.} }{\ \rule{0.5em}{0.5em}}
\begin{document}

\title{\textbf{Algorithms for the Split Variational Inequality Problem}}
\author{Yair Censor$^{1}$, Aviv Gibali$^{2}$ and Simeon Reich$^{2}$\bigskip\\$^{1}$Department of Mathematics, University of Haifa,\\Mt.\ Carmel, 31905 Haifa, Israel \bigskip\\$^{2}$Department of Mathematics,\\The Technion - Israel Institute of Technology\\Technion City, 32000 Haifa, Israel}
\date{March 26, 2011. Revised: June 23, 2011.\medskip\\
This version contains some further corrections discovered at the galley
proof-reading stage.}
\maketitle

\begin{abstract}
We propose a prototypical Split Inverse Problem (SIP) and a new variational
problem, called the Split Variational Inequality Problem (SVIP), which is a
SIP. It entails finding a solution of one inverse problem (e.g., a Variational
Inequality Problem (VIP)), the image of which under a given bounded linear
transformation is a solution of another inverse problem such as a VIP. We
construct iterative algorithms that solve such problems, under reasonable
conditions, in Hilbert space and then discuss special cases, some of which are
new even in Euclidean space.\bigskip

\textbf{Keywords} Constrained Variational Inequality Problem - Hilbert space -
Inverse strongly monotone operator - Iterative method - Metric projection -
Monotone operator - Product space - Split Inverse Problem - Split Variational
Inequality Problem - Variational Inequality Problem

\end{abstract}

\section{Introduction}

In\ this paper we introduce a new problem, which we call the \textit{Split
Variational Inequality Problem} (SVIP). The connection of SVIP to inverse
problems and many relevant references to earlier work are presented in Section
\ref{sec:sip}. Let $H_{1}$ and $H_{2}$ be two real Hilbert spaces$.$ Given
operators $f:H_{1}\rightarrow H_{1}$ and $g:H_{2}\rightarrow H_{2},$ a bounded
linear operator $A:H_{1}\rightarrow H_{2}$, and nonempty, closed and convex
subsets $C\subseteq H_{1}$ and $Q\subseteq H_{2},$ the SVIP is formulated as
follows:%
\begin{gather}
\text{find a point }x^{\ast}\in C\text{ such that }\left\langle f(x^{\ast
}),x-x^{\ast}\right\rangle \geq0\text{ for all }x\in C\label{eq:vip}\\
\text{and such that}\nonumber\\
\text{the point }y^{\ast}=Ax^{\ast}\in Q\text{ and solves }\left\langle
g(y^{\ast}),y-y^{\ast}\right\rangle \geq0\text{ for all }y\in Q.
\label{eq:svip}%
\end{gather}

When looked at separately, (\ref{eq:vip}) is the classical \textit{Variational
Inequality Problem} (VIP) and we denote its solution set by $SOL(C,f)$. The
SVIP constitutes a pair of VIPs, which have to be solved so that the image
$y^{\ast}=Ax^{\ast},$ under a given bounded linear operator $A,$ of the
solution $x^{\ast}$ of the VIP in $H_{1}$, is a solution of another VIP in
another space $H_{2}$.

SVIP is quite general and should enable split minimization between two spaces
so that the image of a solution point of one minimization problem, under a
given bounded linear operator, is a solution point of another minimization
problem. Another special case of the SVIP is the \textit{Split Feasibility
Problem} (SFP) which had already been studied and used in practice as a model
in intensity-modulated radiation therapy (IMRT) treatment planning; see
\cite{CBMT, CEKB}.

We consider two approaches to the solution of the SVIP. The first approach is
to look at the product space $H_{1}\times H_{2}$ and transform the SVIP
(\ref{eq:vip})--(\ref{eq:svip}) into an equivalent \textit{Constrained VIP}
(CVIP) in the product space. We study this CVIP and devise an iterative
algorithm for its solution, which becomes applicable to the original SVIP via
the equivalence between the problems. Our new iterative algorithm for the
CVIP, thus for the SVIP, is inspired by an extension of the extragradient
method of Korpelevich \cite{Korpelevich}. In the second approach we present a
method that does not require the translation to a product space. This
algorithm is inspired by the work of Censor and Segal \cite{CS08a} and Moudafi
\cite{Moudafi}.

Our paper is organized as follows. The connection of SVIP to inverse problems
and many relevant references to earlier work are presented in Section
\ref{sec:sip}. In Section \ref{sec:Preliminaries} we present some
preliminaries. In Section \ref{sec:Algorithm} the algorithm for the
constrained VIP is presented. In Section \ref{sec:SVIP} we analyze the SVIP
and present its equivalence with the CVIP in the product space. In Section
\ref{sec:Direct SVIP} we first present our method for solving the SVIP, which
does not rely on any product space formulation, and then prove convergence. In
Section \ref{sec:applications} we present some applications of the SVIP. It
turns out that in addition to helping us solve the SVIP, the CVIP unifies and
improves several existing problems and methods where a VIP has to be solved
with some additional constraints. Further relations of our results to
previously published work are discussed in detail after Theorems \ref{th:cvip}
and \ref{Theorem1}.

\section{\textbf{The split variational inequality problem as a methodology for
inverse problems\label{sec:sip}}}

Following the case which has already been studied and used in practice as a
model in intensity-modulated radiation therapy (IMRT) treatment planning; see
\cite{CBMT, CEKB}, a prototypical \textit{Split Inverse Problem}\textbf{
}(SIP)\ concerns a model in which there are two spaces $X$ and $Y$ and there
is given a bounded linear operator $A:X\rightarrow Y.$ Additionally, there are
two inverse problems involved, one inverse problem denoted IP$_{1}$ formulated
in the space $X$ and another inverse problem IP$_{2}$ formulated in the space
$Y.$ The Split Inverse Problem (SIP) is the following:%
\begin{gather}
\text{find a point }x^{\ast}\in X\text{ that solves IP}_{1}\text{ }\\
\text{such that }\nonumber\\
\text{the point }y^{\ast}=Ax^{\ast}\in Y\text{ solves IP}_{2}\text{.}%
\end{gather}

Many models of inverse problems can be cast in this framework by choosing
different inverse problems for IP$_{1}$ and IP$_{2}$. The Split Convex
Feasibility Problem (SCFP) first published in \textit{Numerical Algorithms}
\cite{CE} is the first instance of a SIP\ in which the two problems IP$_{1}$
and IP$_{2}$ are CFPs each. This was used for solving an inverse problem in
radiation therapy treatment planning in \cite{CEKB}. More work on the SCFP can
be found in \cite{byrne02, CEKB, Dan-Gao, Moudafi, qx05, ssl08, xu06, Xu,
yang04, zhl09, zy05}. Two candidates for IP$_{1}$ and IP$_{2}$ that come to
mind are the mathematical models of the Convex Feasibility Problem (CFP) and
the problem of constrained optimization. In particular, the CFP formalism is
in itself at the core of the modeling of many inverse problems in various
areas of mathematics and the physical sciences; see, e.g., \cite{cap88} and
references therein for an early example. Over the past four decades, the CFP
has been used to model significant real-world inverse problems in sensor
networks, in radiation therapy treatment planning, in resolution enhancement,
in wavelet-based denoising, in antenna design, in computerized tomography, in
materials science, in watermarking, in data compression, in demosaicking, in
magnetic resonance imaging, in holography, in color imaging, in optics and
neural networks, in graph matching and in adaptive filtering, see
\cite{cccdh11} for exact references to all the above. More work on the CFP can
be found in \cite{Byrne, byrne04, cdh10}.

It is therefore natural to investigate if other inversion models for IP$_{1}$
and IP$_{2}$, besides CFP, can be embedded in the SIP methodology. For
example, CFP in the space $X$ and constrained optimization in the space $Y$?
In this paper we make a step in this direction by formulating a SIP with
Variational Inequality Problems (VIP) in each of the two spaces of the SIP.
Since, as is well-known, both CFP and constrained optimization are special
cases of VIP, our newly-proposed SVIP covers the earlier SCFP and allows for
new SIP situations. Such new situations are described in Section
\ref{sec:applications} below.

\section{Preliminaries\label{sec:Preliminaries}}

Let $H$ be a real Hilbert space with inner product $\langle\cdot,\cdot\rangle$
and norm $\Vert\cdot\Vert,$ and let $D$ be a nonempty, closed and convex
subset of $H$. We write $x^{k}\rightharpoonup x$ to indicate that the sequence
$\left\{  x^{k}\right\}  _{k=0}^{\infty}$ converges weakly to $x,$ and
$x^{k}\rightarrow x$ to indicate that the sequence $\left\{  x^{k}\right\}
_{k=0}^{\infty}$ converges strongly to $x.$ For every point $x\in H,$\ there
exists a unique nearest point in $D$, denoted by $P_{D}(x)$. This point
satisfies%
\begin{equation}
\left\Vert x-P_{D}\left(  x\right)  \right\Vert \leq\left\Vert x-y\right\Vert
\text{\textit{ }for all}\mathit{\ }y\in D.
\end{equation}
The mapping $P_{D}$ is called the metric projection of $H$ onto $D$. We know
that $P_{D}$ is a nonexpansive operator of $H$ onto $D$, i.e.,%
\begin{equation}
\left\Vert P_{D}\left(  x\right)  -P_{D}\left(  y\right)  \right\Vert
\leq\left\Vert x-y\right\Vert \text{\textit{ }for all}\mathit{\ }x,y\in H.
\end{equation}
The metric projection $P_{D}$ is characterized by the fact that $P_{D}\left(
x\right)  \in D$ and
\begin{equation}
\left\langle x-P_{D}\left(  x\right)  ,P_{D}\left(  x\right)  -y\right\rangle
\geq0\text{ for all }x\in H,\text{ }y\in D, \label{eq:ProjP1}%
\end{equation}
and has the property%
\begin{equation}
\left\Vert x-y\right\Vert ^{2}\geq\left\Vert x-P_{D}\left(  x\right)
\right\Vert ^{2}+\left\Vert y-P_{D}\left(  x\right)  \right\Vert ^{2}\text{
for all }x\in H,\text{ }y\in D. \label{eq:ProjP2}%
\end{equation}
It is known that in a Hilbert space $H$,%
\begin{equation}
\Vert\lambda x+(1-\lambda)y\Vert^{2}=\lambda\Vert x\Vert^{2}+(1-\lambda)\Vert
y\Vert^{2}-\lambda(1-\lambda)\Vert x-y\Vert^{2} \label{eq:ConvexComb}%
\end{equation}
for all $x,y\in H$ and $\lambda\in\lbrack0,1].$

The following lemma was proved in \cite[Lemma 3.2]{Takahashi}.

\begin{lemma}
\label{Lemma:Takahashi} Let $H$ be a Hilbert space and let $D$ be a nonempty,
closed and convex subset of $H.$ If the sequence $\left\{  x^{k}\right\}
_{k=0}^{\infty}\subset H$ is \texttt{Fej\'{e}r-monotone} with respect to $D,$
i.e., for every $u\in D,$%
\begin{equation}
\Vert x^{k+1}-u\Vert\leq\Vert x^{k}-u\Vert\text{ for all }k\geq0,
\end{equation}
then $\left\{  P_{D}\left(  x^{k}\right)  \right\}  _{k=0}^{\infty}$ converges
strongly to some $z\in D.$
\end{lemma}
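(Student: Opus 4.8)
The plan is to show that the sequence $\{P_{D}(x^{k})\}$ is a Cauchy sequence in $H$; strong convergence to a point $z$ then follows from completeness, and $z\in D$ because $D$ is closed and each $P_{D}(x^{k})$ lies in $D$. Throughout I write $p^{k}:=P_{D}(x^{k})$ and let $d_{k}:=\Vert x^{k}-p^{k}\Vert=\operatorname{dist}(x^{k},D)$ denote the distance from $x^{k}$ to the set $D$.

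First I would record a monotonicity fact for the distances. Taking $u=p^{k}\in D$ in the Fej\'{e}r-monotonicity hypothesis gives $\Vert x^{k+1}-p^{k}\Vert\leq\Vert x^{k}-p^{k}\Vert=d_{k}$, while the fact that $p^{k+1}$ is the point of $D$ nearest to $x^{k+1}$ yields $d_{k+1}=\Vert x^{k+1}-p^{k+1}\Vert\leq\Vert x^{k+1}-p^{k}\Vert$. Combining the two shows that $\{d_{k}\}$ is nonincreasing; being bounded below by $0$, it converges to some $d\geq0$, and consequently $\{d_{k}^{2}\}$ is a convergent, hence Cauchy, sequence of reals.

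The heart of the argument is a Pythagorean-type estimate for $\Vert p^{k+m}-p^{k}\Vert$. For fixed $k$ and $m\geq0$ I would apply property (\ref{eq:ProjP2}) with $x=x^{k+m}$ (so that $P_{D}(x)=p^{k+m}$) and $y=p^{k}\in D$, obtaining
\[
\Vert x^{k+m}-p^{k}\Vert^{2}\ \geq\ \Vert x^{k+m}-p^{k+m}\Vert^{2}+\Vert p^{k+m}-p^{k}\Vert^{2}=d_{k+m}^{2}+\Vert p^{k+m}-p^{k}\Vert^{2}.
\]
On the other hand, Fej\'{e}r monotonicity applied with $u=p^{k}$ gives $\Vert x^{k+m}-p^{k}\Vert\leq\Vert x^{k}-p^{k}\Vert=d_{k}$. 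Putting the two together produces the key bound $\Vert p^{k+m}-p^{k}\Vert^{2}\leq d_{k}^{2}-d_{k+m}^{2}$.

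Finally, since $\{d_{k}^{2}\}$ is Cauchy and nonincreasing, the right-hand side $d_{k}^{2}-d_{k+m}^{2}$ tends to $0$ as $k\to\infty$ uniformly in $m$, so $\{p^{k}\}$ is Cauchy and therefore converges strongly to some $z\in D$. The only delicate point is the Pythagorean estimate: the whole proof hinges on invoking the projection inequality (\ref{eq:ProjP2}) with the ``swapped'' arguments $x=x^{k+m}$ and $y=p^{k}$, so that the term $\Vert x^{k+m}-p^{k+m}\Vert^{2}=d_{k+m}^{2}$ is isolated and $\Vert p^{k+m}-p^{k}\Vert^{2}$ is controlled by the monotone quantity $d_{k}^{2}-d_{k+m}^{2}$. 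Once this is in place, the remaining steps are routine.
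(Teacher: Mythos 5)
Your proof is correct and complete: the monotonicity of $d_{k}=\operatorname{dist}(x^{k},D)$, the Pythagorean estimate $\Vert p^{k+m}-p^{k}\Vert^{2}\leq d_{k}^{2}-d_{k+m}^{2}$ obtained from (\ref{eq:ProjP2}) with $x=x^{k+m}$ and $y=p^{k}$, and the resulting Cauchy property all go through without gaps. The paper itself gives no proof of this lemma but merely cites Takahashi--Toyoda, and your argument is essentially the standard one used there, so there is nothing to add.
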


The next lemma is also known (see, e.g., \cite[Lemma 3.1]{Nadezhkina}).

\begin{lemma}
\label{Lemma:Schu} Let $H$ be a Hilbert space, $\left\{  \alpha_{k}\right\}
_{k=0}^{\infty}$ be a real sequence satisfying $0<a\leq\alpha_{k}\leq b<1$ for
all $k\geq0,$ and let $\left\{  v^{k}\right\}  _{k=0}^{\infty}$ and $\left\{
w^{k}\right\}  _{k=0}^{\infty}$ be two sequences in $H$ such that for some
$\sigma\geq0$,%
\begin{equation}
\limsup_{k\rightarrow\infty}\Vert v^{k}\Vert\leq\sigma,\text{ and }%
\limsup_{k\rightarrow\infty}\Vert w^{k}\Vert\leq\sigma.
\end{equation}
If%
\begin{equation}
\lim_{k\rightarrow\infty}\Vert\alpha_{k}v^{k}+(1-\alpha_{k})w^{k}\Vert=\sigma,
\end{equation}
then%
\begin{equation}
\lim_{k\rightarrow\infty}\Vert v^{k}-w^{k}\Vert=0.
\end{equation}

\end{lemma}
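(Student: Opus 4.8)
The plan is to turn the hypothesis on the convex combination into direct control of $\Vert v^{k}-w^{k}\Vert$ by means of the Hilbert-space identity (\ref{eq:ConvexComb}). Setting $x=v^{k}$, $y=w^{k}$ and $\lambda=\alpha_{k}$ there and rearranging, I would first record
\[
\alpha_{k}(1-\alpha_{k})\Vert v^{k}-w^{k}\Vert^{2}=\alpha_{k}\Vert v^{k}\Vert^{2}+(1-\alpha_{k})\Vert w^{k}\Vert^{2}-\Vert\alpha_{k}v^{k}+(1-\alpha_{k})w^{k}\Vert^{2}.
\]
The left-hand side is nonnegative, so it will suffice to show that its $\limsup$ is at most $0$ and then to divide by the factor $\alpha_{k}(1-\alpha_{k})$, which the bounds $0<a\leq\alpha_{k}\leq b<1$ keep bounded away from zero.

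Next I would estimate the right-hand side. Since $\alpha_{k}\Vert v^{k}\Vert^{2}+(1-\alpha_{k})\Vert w^{k}\Vert^{2}$ is a convex combination of $\Vert v^{k}\Vert^{2}$ and $\Vert w^{k}\Vert^{2}$, it is bounded above by $\max\{\Vert v^{k}\Vert^{2},\Vert w^{k}\Vert^{2}\}$; combining this with the hypotheses $\limsup_{k}\Vert v^{k}\Vert\leq\sigma$ and $\limsup_{k}\Vert w^{k}\Vert\leq\sigma$ (and continuity of $t\mapsto t^{2}$) gives $\limsup_{k}\bigl(\alpha_{k}\Vert v^{k}\Vert^{2}+(1-\alpha_{k})\Vert w^{k}\Vert^{2}\bigr)\leq\sigma^{2}$. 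On the other hand, the assumption $\lim_{k}\Vert\alpha_{k}v^{k}+(1-\alpha_{k})w^{k}\Vert=\sigma$ yields $\lim_{k}\Vert\alpha_{k}v^{k}+(1-\alpha_{k})w^{k}\Vert^{2}=\sigma^{2}$. Because the subtracted term has a genuine limit, it may be pulled out of the $\limsup$ additively, so that
\[
\limsup_{k\rightarrow\infty}\alpha_{k}(1-\alpha_{k})\Vert v^{k}-w^{k}\Vert^{2}\leq\sigma^{2}-\sigma^{2}=0.
\]

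Since the quantity on the left is nonnegative, its limit is $0$. Finally, from $\alpha_{k}(1-\alpha_{k})\geq a(1-b)>0$ I would conclude
\[
\Vert v^{k}-w^{k}\Vert^{2}\leq\frac{1}{a(1-b)}\,\alpha_{k}(1-\alpha_{k})\Vert v^{k}-w^{k}\Vert^{2}\rightarrow0,
\]
which gives the claim. The only delicate point — the ``main obstacle,'' such as it is — lies in the handling of the $\limsup$: one must bound the convex combination of the squared norms by their maximum rather than by naive subadditivity (the latter would produce a useless factor of $2\sigma^{2}$), and one must use that $\Vert\alpha_{k}v^{k}+(1-\alpha_{k})w^{k}\Vert^{2}$ converges, not merely has a $\limsup$, so that it can be separated out. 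Neither $\Vert v^{k}\Vert$ nor $\Vert w^{k}\Vert$ is assumed to converge individually, so the argument must proceed entirely through these one-sided estimates; the two-sided bound on $\alpha_{k}$ is what makes the final division legitimate.
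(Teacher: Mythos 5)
Your proof is correct. The paper does not prove this lemma at all --- it is quoted as known, with a citation to \cite[Lemma 3.1]{Nadezhkina} (it is essentially Schu's lemma) --- so there is no in-paper argument to compare against; your route via the identity (\ref{eq:ConvexComb}), the bound of the convex combination by the maximum of the squared norms, and the division by $\alpha_k(1-\alpha_k)\geq a(1-b)>0$ is the standard proof and all steps check out.
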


\begin{definition}
Let $H$ be a Hilbert space, $D$ a closed and convex subset of $H,$ and let
$M:D\rightarrow H$ be an operator. Then $M$ is said to be \texttt{demiclosed}
at $y\in H$ if for any sequence $\left\{  x^{k}\right\}  _{k=0}^{\infty}$ in
$D$ such that $x^{k}\rightharpoonup\overline{x}\in D$ and $M(x^{k})\rightarrow
y,$ we have $M(\overline{x})=y.$
\end{definition}

Our next lemma is the well-known Demiclosedness Principle \cite{Browder}.

\begin{lemma}
Let $H$ be a Hilbert space, $D$ a closed and convex subset of $H,$ and
$N:D\rightarrow H$ a nonexpansive operator. Then $I-N$ ($I$ is the identity
operator on $H$) is \texttt{demiclosed} at $y\in H.$
\end{lemma}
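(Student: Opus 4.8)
The plan is to argue directly from the nonexpansivity of $N$ together with the interplay between weak and strong convergence, avoiding any appeal to Opial's property. Suppose $\left\{ x^{k}\right\} _{k=0}^{\infty}\subset D$ satisfies $x^{k}\rightharpoonup\overline{x}\in D$ and $(I-N)(x^{k})\rightarrow y$. I would write $w^{k}:=x^{k}-N(x^{k})$, so that $w^{k}\rightarrow y$ strongly, and set $z:=\overline{x}-N(\overline{x})=(I-N)(\overline{x})$. The goal is to show $z=y$.

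First I would exploit nonexpansivity in the form $\Vert N(x^{k})-N(\overline{x})\Vert^{2}\leq\Vert x^{k}-\overline{x}\Vert^{2}$. The key algebraic step is to rewrite the left-hand vector using $N(x^{k})=x^{k}-w^{k}$ and $N(\overline{x})=\overline{x}-z$ as
\begin{equation}
N(x^{k})-N(\overline{x})=(x^{k}-\overline{x})-(w^{k}-z).
\end{equation}
Expanding the squared norm via $\Vert a-b\Vert^{2}=\Vert a\Vert^{2}-2\langle a,b\rangle+\Vert b\Vert^{2}$ and cancelling $\Vert x^{k}-\overline{x}\Vert^{2}$ from both sides of the nonexpansivity inequality yields
\begin{equation}
2\langle x^{k}-\overline{x},\,w^{k}-z\rangle\geq\Vert w^{k}-z\Vert^{2}\quad\text{for all }k\geq0.
\end{equation}

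Then I would pass to the limit as $k\rightarrow\infty$. On the right-hand side, $w^{k}-z\rightarrow y-z$ strongly, so $\Vert w^{k}-z\Vert^{2}\rightarrow\Vert y-z\Vert^{2}$. For the left-hand side, I would split
\begin{equation}
\langle x^{k}-\overline{x},w^{k}-z\rangle=\langle x^{k}-\overline{x},w^{k}-y\rangle+\langle x^{k}-\overline{x},y-z\rangle;
\end{equation}
since $x^{k}\rightharpoonup\overline{x}$, the sequence $\left\{ x^{k}-\overline{x}\right\} $ is bounded, so the first term vanishes because $w^{k}-y\rightarrow0$ strongly, while the second term vanishes because $x^{k}-\overline{x}\rightharpoonup0$ tested against the fixed vector $y-z$. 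Hence the left-hand side tends to $0$, and the inequality forces $\Vert y-z\Vert^{2}\leq0$, whence $z=y$, i.e., $(I-N)(\overline{x})=y$, as required.

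The step I expect to be the main obstacle is the justification of the weak--strong limit in the inner product: one must be careful that the inner product of a merely weakly convergent factor with a strongly convergent factor does converge to the product of the limits, which is precisely why the splitting above---isolating the strongly null part $w^{k}-y$ and the fixed part $y-z$---together with the boundedness of $\left\{ x^{k}-\overline{x}\right\} $ is needed. Everything else reduces to a routine Hilbert-space norm expansion.
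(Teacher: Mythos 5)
Your proof is correct. Note that the paper itself offers no proof of this lemma---it is quoted as Browder's Demiclosedness Principle with a citation---so there is no internal argument to match; the classical Hilbert-space proofs are either (a) a contradiction argument via Opial's condition (which the paper states immediately after this lemma, and which is the route Opial's original paper takes), or (b) the direct identity-based argument you give. Your version is the latter: the inequality
\begin{equation}
2\langle x^{k}-\overline{x},\,w^{k}-z\rangle\geq\Vert w^{k}-z\Vert^{2},
\end{equation}
obtained by expanding $\Vert(x^{k}-\overline{x})-(w^{k}-z)\Vert^{2}\leq\Vert x^{k}-\overline{x}\Vert^{2}$ and cancelling, is exactly right, and your passage to the limit is sound: the splitting $\langle x^{k}-\overline{x},w^{k}-z\rangle=\langle x^{k}-\overline{x},w^{k}-y\rangle+\langle x^{k}-\overline{x},y-z\rangle$, together with the boundedness of a weakly convergent sequence (Banach--Steinhaus) for the first term and weak convergence tested against the fixed vector $y-z$ for the second, is precisely the standard justification that a ``weak times strong'' inner product converges to the inner product of the limits. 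The step you flagged as the main obstacle is therefore handled correctly. What your route buys over the Opial route is that it is a direct (non-contradiction) argument, it quantifies the failure of demiclosedness ($\Vert y-z\Vert^{2}$ is dominated by a quantity that vanishes), and it generalizes verbatim to firmly nonexpansive and averaged operators; what the Opial route buys is applicability beyond Hilbert spaces to Banach spaces satisfying Opial's condition, where your inner-product expansion is unavailable.
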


For instance, the orthogonal projection $P$ onto a closed and convex set is a
demiclosed operator everywhere because $I-P$ is nonexpansive \cite[page
17]{Goebel+Reich}.

The next property is known as the \textit{Opial condition} \cite[Lemma
1]{Opial}. It characterizes the weak limit of a weakly convergent sequence in
Hilbert space.

\begin{condition}
(\textbf{Opial) }For any sequence $\left\{  x^{k}\right\}  _{k=0}^{\infty}$ in
$H$ that converges weakly to $x$,%
\begin{equation}
\liminf_{k\rightarrow\infty}\Vert x^{k}-x\Vert<\liminf_{k\rightarrow\infty
}\Vert x^{k}-y\Vert\text{ for all }y\neq x.
\end{equation}

\end{condition}

\begin{definition}
Let $h:H\rightarrow H$ be an operator and let $D\subseteq H.$

(i) $h$ is called \texttt{inverse strongly monotone (ISM)} with constant
$\alpha$ on $D\subseteq H$ if%
\begin{equation}
\langle h(x)-h(y),x-y\rangle\geq\alpha\Vert h(x)-h(y)\Vert^{2}\text{ for all
}x,y\in D.
\end{equation}

(ii) $h$ is called \texttt{monotone} on $D\subseteq H$ if%
\begin{equation}
\langle h(x)-h(y),x-y\rangle\geq0\text{ for all }x,y\in D.
\end{equation}

\end{definition}

\begin{definition}
An operator $h:H\rightarrow H$ is called \texttt{Lipschitz continuous} on
$D\subseteq H$ with constant $\kappa>0$ if%
\begin{equation}
\Vert h(x)-h(y)\Vert\leq\kappa\Vert x-y\Vert\text{\ for all\ }x,y\in D.
\end{equation}

\end{definition}

\begin{definition}
Let $S:H\rightrightarrows2^{H}\mathcal{\ }$be a point-to-set operator defined
on a real Hilbert space $H$. $S$ is called a \texttt{maximal monotone
operator} if $S$ is \texttt{monotone}, i.e.,%
\begin{equation}
\left\langle u-v,x-y\right\rangle \geq0,\text{ for all }u\in S(x)\text{ and
for all }v\in S(y),
\end{equation}
and the graph $G(S)$ of $S,$%
\begin{equation}
G(S):=\left\{  \left(  x,u\right)  \in H\times H\mid u\in S(x)\right\}  ,
\end{equation}
is not properly contained in the graph of any other monotone operator.
\end{definition}

It is clear that a monotone operator $S$ is maximal if and only if, for each
$\left(  x,u\right)  \in H\times H,$ $\left\langle u-v,x-y\right\rangle \geq0$
for all $\left(  v,y\right)  \in G(S)$ implies that $u\in S(x).$

\begin{definition}
Let $D$ be a nonempty, closed and convex subset of $H.$ The \texttt{normal
cone} of $D$ at the point $w\in D$ is defined by%
\begin{equation}
N_{D}\left(  w\right)  :=\{d\in H\mid\left\langle d,y-w\right\rangle
\leq0\text{ for all }y\in D\}. \label{eq:normal-c}%
\end{equation}

\end{definition}

Let $h$ be an $\alpha$-ISM operator on $D\subseteq H,$ define the following
point-to-set operator:%
\begin{equation}
S(w):=\left\{
\begin{array}
[c]{cc}%
h(w)+N_{D}\left(  w\right)  , & w\in C,\\
\emptyset, & w\notin C.
\end{array}
\right.  \label{eq:maximal-S}%
\end{equation}

In these circumstances, it follows from \cite[Theorem 3]{Rockafellar76} that
$S$ is maximal monotone. In addition, $0\in S(w)\ $if and only if $w\in$
$SOL(D,h).$

For $T:H\rightarrow H$, denote by $\operatorname*{Fix}(T)$ the fixed point set
of $T,$ i.e.,%
\begin{equation}
\operatorname*{Fix}(T):=\{x\in H\mid T(x)=x\}.
\end{equation}
It is well-known that%
\begin{equation}
x^{\ast}\in SOL(D,h)\Leftrightarrow x^{\ast}=P_{D}(x^{\ast}-\lambda h(x^{\ast
})), \label{eq:fix-vip}%
\end{equation}
i.e., $x^{\ast}\in\operatorname*{Fix}(P_{D}(I-\lambda h)).$ It is also known
that every nonexpansive operator $T:H\rightarrow H$ satisfies, for all
$(x,y)\in H\times H,$ the inequality
\begin{equation}
\langle(x-T(x))-(y-T(y)),T(y)-T(x)\rangle\leq(1/2)\Vert(T(x)-x)-(T(y)-y)\Vert
^{2}\text{ }%
\end{equation}
and therefore we get, for all $(x,y)\in H\times\operatorname*{Fix}(T),$
\begin{equation}
\langle x-T(x),y-T(x)\rangle\leq(1/2)\Vert T(x)-x\Vert^{2};\text{ }
\label{eq:Ne(Crombez)}%
\end{equation}
see, e.g., \cite[Theorem 3]{Crombez06} and \cite[Theorem 1]{Crombez}.

In the next lemma we collect several important properties that will be needed
in the sequel.

\begin{lemma}
\label{lemma:Mod-proj} Let $D\subseteq H$ be a nonempty, closed and convex
subset and let $h:H\rightarrow H$ be an $\alpha$-ISM operator on $H$. If
$\lambda\in\lbrack0,2\alpha],$ then\smallskip\ 

(i) the operator $P_{D}(I-\lambda h)$ is nonexpansive on $D.$

If, in addition, for all $x^{\ast}\in SOL(D,h),$%
\begin{equation}
\langle h(x),P_{D}(I-\lambda h)(x)-x^{\ast}\rangle\geq0\text{ for all\ }x\in
H, \label{eq:2.24}%
\end{equation}
then$\smallskip$ the following inequalities hold:

(ii) for all $x\in H$ and $q\in\operatorname*{Fix}(P_{D}(I-\lambda h)),$%
\begin{equation}
\langle P_{D}(I-\lambda h)(x)-x,P_{D}(I-\lambda h)(x)-q\rangle\leq0;
\label{QFNE}%
\end{equation}

(iii) for all $x\in H$ and $q\in\operatorname*{Fix}(P_{D}(I-\lambda h)),$%
\begin{equation}
\left\Vert P_{D}(I-\lambda h)(x)-q\right\Vert ^{2}\leq\left\Vert
x-q\right\Vert ^{2}-\left\Vert P_{D}(I-\lambda h)(x)-x\right\Vert ^{2}.
\label{eq:2.26}%
\end{equation}

\end{lemma}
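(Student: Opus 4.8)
The plan is to abbreviate $T:=P_{D}(I-\lambda h)$ throughout and to establish the three parts in order, since (ii) feeds directly into (iii). For part (i), I would first show that the inner map $I-\lambda h$ is nonexpansive and then compose with $P_{D}$. Expanding
\begin{equation}
\Vert(I-\lambda h)(x)-(I-\lambda h)(y)\Vert^{2}=\Vert x-y\Vert^{2}-2\lambda\langle h(x)-h(y),x-y\rangle+\lambda^{2}\Vert h(x)-h(y)\Vert^{2}
\end{equation}
and inserting the $\alpha$-ISM inequality $\langle h(x)-h(y),x-y\rangle\geq\alpha\Vert h(x)-h(y)\Vert^{2}$ produces the upper bound $\Vert x-y\Vert^{2}+\lambda(\lambda-2\alpha)\Vert h(x)-h(y)\Vert^{2}$. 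Since $\lambda\in[0,2\alpha]$ forces $\lambda(\lambda-2\alpha)\leq0$, this last term is nonpositive, so $I-\lambda h$ is nonexpansive; as $P_{D}$ is also nonexpansive, the composition $T$ is nonexpansive, which proves (i).

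For part (ii), the key observation is that a fixed point $q$ of $T$ is itself a VIP solution: by the characterization (\ref{eq:fix-vip}), membership in $\operatorname*{Fix}(P_{D}(I-\lambda h))$ is equivalent to membership in $SOL(D,h)$, so $q$ is an admissible choice of $x^{\ast}$ in hypothesis (\ref{eq:2.24}). I would then apply the projection inequality (\ref{eq:ProjP1}) with the point $x-\lambda h(x)$ in the role of its first argument and $y=q\in D$, which yields
\begin{equation}
\langle(x-\lambda h(x))-T(x),\,T(x)-q\rangle\geq0.
\end{equation}
Splitting off the term carrying $\lambda h(x)$ rearranges this into $\langle x-T(x),T(x)-q\rangle\geq\lambda\langle h(x),T(x)-q\rangle$. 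Taking $x^{\ast}=q$ in (\ref{eq:2.24}) gives $\langle h(x),T(x)-q\rangle\geq0$, and since $\lambda\geq0$ the right-hand side is nonnegative; hence $\langle x-T(x),T(x)-q\rangle\geq0$, which is exactly (\ref{QFNE}) after a change of sign.

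For part (iii), I would expand the norm along the decomposition $x-q=(x-T(x))+(T(x)-q)$:
\begin{equation}
\Vert x-q\Vert^{2}=\Vert x-T(x)\Vert^{2}+2\langle x-T(x),T(x)-q\rangle+\Vert T(x)-q\Vert^{2}.
\end{equation}
Part (ii) shows the cross term is nonnegative, so discarding it and rearranging yields $\Vert T(x)-q\Vert^{2}\leq\Vert x-q\Vert^{2}-\Vert T(x)-x\Vert^{2}$, which is (\ref{eq:2.26}). The computations in (i) and (iii) are routine; I expect the only genuine care to be needed in part (ii), where one must orient the projection inequality correctly and, crucially, recognize that $\operatorname*{Fix}(T)=SOL(D,h)$ so that hypothesis (\ref{eq:2.24}) may legitimately be invoked with $x^{\ast}=q$.
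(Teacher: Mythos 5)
Your proposal is correct and follows essentially the same route as the paper: the same ISM expansion for (i), the same use of the projection characterization (\ref{eq:ProjP1}) together with (\ref{eq:fix-vip}) and hypothesis (\ref{eq:2.24}) (with $x^{\ast}=q$) for (ii), and the same norm expansion with the cross term controlled by (ii) for (iii). No gaps.
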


\begin{proof}
(i) Let $x,y\in H.$ Then%
\begin{align}
\Vert P_{D}(I-\lambda h)(x)-P_{D}(I-\lambda h)(y)\Vert^{2}  &  =\Vert
P_{D}(x-\lambda h(x))-P_{D}(y-\lambda h(y))\Vert^{2}\nonumber\\
&  \leq\Vert x-\lambda h(x)-(y-\lambda h(y))\Vert^{2}\nonumber\\
&  =\Vert(x-y)-\lambda(h(x)-h(y))\Vert^{2}\nonumber\\
&  =\Vert x-y\Vert^{2}-2\lambda\langle x-y,h(x)-h(y)\rangle\nonumber\\
&  +\lambda^{2}\Vert h(x)-h(y)\Vert^{2}\nonumber\\
&  \leq\Vert x-y\Vert^{2}-2\lambda\alpha\Vert h(x)-h(y)\Vert^{2}\nonumber\\
&  +\lambda^{2}\Vert h(x)-h(y)\Vert^{2}\nonumber\\
&  =\Vert x-y\Vert^{2}+\lambda(\lambda-2\alpha)\Vert h(x)-h(y)\Vert
^{2}\nonumber\\
&  \leq\Vert x-y\Vert^{2}.
\end{align}
(ii) Let $x\in H$ and $q\in\operatorname*{Fix}(P_{D}(I-\lambda h)).$ Then%
\begin{align}
&  \langle P_{D}(x-\lambda h(x))-x,P_{D}(x-\lambda h(x))-q\rangle\nonumber\\
&  =\langle P_{D}(x-\lambda h(x))-x+\lambda h(x)-\lambda h(x),P_{D}(x-\lambda
h(x))-q\rangle\nonumber\\
&  =\langle P_{D}(x-\lambda h(x))-(x-\lambda h(x)),P_{D}(x-\lambda
h(x))-q\rangle\nonumber\\
&  -\lambda\langle h(x),P_{D}(x-\lambda h(x))-q\rangle.
\end{align}
By (\ref{eq:ProjP1}), (\ref{eq:fix-vip}) and (\ref{eq:2.24}), we get%
\begin{equation}
\langle P_{D}(x-\lambda h(x))-x,P_{D}(x-\lambda h(x))-q\rangle\leq0.
\end{equation}
(iii) Let $x\in H$ and $q\in\operatorname*{Fix}(P_{D}(I-\lambda h)).$ Then%
\begin{align}
\left\Vert q-x\right\Vert ^{2}  &  =\left\Vert (P_{D}(I-\lambda
h)(x)-x)-(P_{D}(I-\lambda h)(x)-q)\right\Vert ^{2}\nonumber\\
&  =\left\Vert P_{D}(I-\lambda h)(x)-x\right\Vert ^{2}+\left\Vert
P_{D}(I-\lambda h)(x)-q\right\Vert ^{2}\nonumber\\
&  -2\langle P_{D}(I-\lambda h)(x)-x,P_{D}(I-\lambda h)(x)-q\rangle.
\end{align}
By (ii), we get%
\begin{equation}
-2\langle P_{D}(I-\lambda h)(x)-x,P_{D}(I-\lambda h)(x)-q\rangle\geq0.
\end{equation}
Thus,%
\begin{equation}
\left\Vert q-x\right\Vert ^{2}\geq\left\Vert P_{D}(I-\lambda
h)(x)-x\right\Vert ^{2}+\left\Vert P_{D}(I-\lambda h)(x)-q\right\Vert ^{2}%
\end{equation}
or%
\begin{equation}
\left\Vert P_{D}(I-\lambda h)(x)-q\right\Vert ^{2}\leq\left\Vert
q-x\right\Vert ^{2}-\left\Vert P_{D}(I-\lambda h)(x)-x\right\Vert ^{2},
\end{equation}
as asserted.
\end{proof}

Observe that, under the additional condition (\ref{eq:2.24}), Equation
(\ref{QFNE}) means that the operator $P_{D}(I-\lambda h)$ belongs to the class
of operators called the $\mathcal{T}$-class. This class $\mathcal{T}$ of
operators was introduced and investigated by Bauschke and Combettes in
\cite[Definition 2.2]{BC01} and by Combettes in \cite{Co}. Operators in this
class were named \textit{directed operators }by Zaknoon \cite{Z} and further
studied under this name by Segal \cite{Seg08} and by Censor and Segal
\cite{CS08, CS08a, CS09}. Cegielski \cite[Def. 2.1]{Ceg08} studied these
operators under the name \textit{separating operators}. Since both
\textit{directed }and\textit{\ separating }are key words of other,
widely-used, mathematical entities, Cegielski and Censor have recently
introduced the term \textit{cutter operators} \cite{cc11}. This class
coincides with the class $\mathcal{F}^{\nu}$ for $\nu=1$ \cite{Crombez} and
with the class DC$_{\boldsymbol{p}}$ for $\boldsymbol{p}=-1$ \cite{mp08}. The
term \textit{firmly quasi-nonexpansive} (FQNE) for $\mathcal{T}$-class
operators was used by Yamada and Ogura \cite{Yamada} because every
\textit{firmly nonexpansive} (FNE) mapping \cite[page 42]{Goebel+Reich} is
obviously FQNE.

\section{An algorithm for solving the constrained variational inequality
problem\label{sec:Algorithm}}

Let\textit{ }$f:H\rightarrow H$, and let $C$ and $\Omega$ be nonempty, closed
and convex subsets of $H$. The \textit{Constrained} \textit{Variational
Inequality Problem} (CVIP) is:
\begin{equation}
\text{find }x^{\ast}\in C\cap\Omega\text{ such that }\left\langle f(x^{\ast
}),x-x^{\ast}\right\rangle \geq0\text{ for all }x\in C. \label{eq:cvip}%
\end{equation}
The iterative algorithm for this CVIP, presented next, is inspired by our
earlier work \cite{CGR,CGR2} in which we modified the extragradient method of
Korpelevich \cite{Korpelevich}. The following conditions are needed for the
convergence theorem.

\begin{condition}
\label{Condition:a} $f$ is monotone on $C$.
\end{condition}

\begin{condition}
\label{Condition:b} $f$ is Lipschitz continuous on $H$ with constant
$\kappa>0.$
\end{condition}

\begin{condition}
\label{Condition:c} $\Omega\cap SOL(C,f)\neq\emptyset.$
\end{condition}

Let $\left\{  \lambda_{k}\right\}  _{k=0}^{\infty}\subset\left[  a,b\right]
$\ for some $a,b\in(0,1/\kappa)$, and let \textit{ }$\left\{  \alpha
_{k}\right\}  _{k=0}^{\infty}\subset\left[  c,d\right]  $ for some\textit{
}$c,d\in(0,1)$. Then the following algorithm generates two sequences that
converge to a point $z\in\Omega$ $\cap$ SOL$(C,f),$ as the convergence theorem
that follows shows.

\begin{algorithm}
\label{alg:SubExt4SVIP}$\left.  {}\right.  $

\textbf{Initialization:} Select an arbitrary starting point $x^{0}\in H$.

\textbf{Iterative step:} Given the current iterate $x^{k},$ compute%
\begin{equation}
y^{k}=P_{C}(x^{k}-\lambda_{k}f(x^{k})),
\end{equation}
construct the half-space $T_{k}$ the bounding hyperplane of which supports $C$
at $y^{k},$%
\begin{equation}
T_{k}:=\{w\in H\mid\left\langle \left(  x^{k}-\lambda_{k}f(x^{k})\right)
-y^{k},w-y^{k}\right\rangle \leq0\},
\end{equation}
and then calculate the next iterate by%
\begin{equation}
x^{k+1}=\alpha_{k}x^{k}+(1-\alpha_{k})P_{\Omega}\left(  P_{T_{k}}%
(x^{k}-\lambda_{k}f(y^{k}))\right)  . \label{eq:3.4}%
\end{equation}

\end{algorithm}

\begin{theorem}
\label{th:cvip}Let\textit{ }$f:H\rightarrow H$, and let $C$ and $\Omega$ be
nonempty, closed and convex subsets of $H$. Assume that Conditions
\ref{Condition:a}--\ref{Condition:c} hold, and let $\left\{  x^{k}\right\}
_{k=0}^{\infty}$ and $\left\{  y^{k}\right\}  _{k=0}^{\infty}$ be any two
sequences generated by Algorithm \ref{alg:SubExt4SVIP} with $\left\{
\lambda_{k}\right\}  _{k=0}^{\infty}\subset\left[  a,b\right]  $\textit{\ for
some }$a,b\in(0,1/\kappa)$\textit{ and }$\left\{  \alpha_{k}\right\}
_{k=0}^{\infty}\subset\left[  c,d\right]  $ for some\textit{ }$c,d\in(0,1)$.
Then $\left\{  x^{k}\right\}  _{k=0}^{\infty}$ and $\left\{  y^{k}\right\}
_{k=0}^{\infty}$ converge weakly to the same point $z\in\Omega\cap SOL(C,f)$
and%
\begin{equation}
z=\lim_{k\rightarrow\infty}P_{\Omega\cap SOL(C,f)}(x^{k}).
\end{equation}

\end{theorem}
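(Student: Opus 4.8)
The plan is to establish, in order: (a) a one-step quasi-Fej\'er estimate that makes $\{x^k\}$ Fej\'er-monotone with respect to $D:=\Omega\cap SOL(C,f)$; (b) asymptotic regularity, i.e.\ $\|x^k-y^k\|\to0$ and $\|x^k-P_\Omega(z^k)\|\to0$, where $z^k:=P_{T_k}(x^k-\lambda_k f(y^k))$; (c) that every weak cluster point of $\{x^k\}$ lies in $D$; and (d) that the whole sequence converges weakly to the point $z$ furnished by Lemma~\ref{Lemma:Takahashi}.

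For (a), fix $u\in D$ (nonempty by Condition~\ref{Condition:c}). The first observation is that $C\subseteq T_k$, hence $u,y^k\in T_k$: indeed, the projection characterization (\ref{eq:ProjP1}) applied to $y^k=P_C(x^k-\lambda_k f(x^k))$ says precisely that $\langle(x^k-\lambda_k f(x^k))-y^k,w-y^k\rangle\le0$ for every $w\in C$. Starting from (\ref{eq:ProjP2}) with the set $T_k$ and the point $x^k-\lambda_k f(y^k)$, expanding the squares, and using the monotonicity of $f$ together with $u\in SOL(C,f)$ to discard the term $-2\lambda_k\langle f(y^k),y^k-u\rangle\le0$, I expect to reach
\[
\|z^k-u\|^2\le\|x^k-u\|^2-\|x^k-y^k\|^2-\|y^k-z^k\|^2+2\lambda_k\langle f(x^k)-f(y^k),z^k-y^k\rangle,
\]
where the inequality $\langle(x^k-\lambda_k f(x^k))-y^k,z^k-y^k\rangle\le0$ (from $z^k\in T_k$) is what converts $f(y^k)$ into $f(x^k)-f(y^k)$ in the cross term. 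Bounding that cross term by Cauchy--Schwarz, Condition~\ref{Condition:b}, and $2st\le s^2+t^2$ collapses the last three terms into $-(1-\lambda_k^2\kappa^2)\|x^k-y^k\|^2$, which is strictly negative since $\lambda_k\le b<1/\kappa$. Feeding this into the convex-combination identity (\ref{eq:ConvexComb}) for $x^{k+1}=\alpha_k x^k+(1-\alpha_k)P_\Omega(z^k)$ and using that $P_\Omega$ is nonexpansive with $u\in\Omega$ yields
\[
\|x^{k+1}-u\|^2\le\|x^k-u\|^2-(1-\alpha_k)(1-\lambda_k^2\kappa^2)\|x^k-y^k\|^2-\alpha_k(1-\alpha_k)\|x^k-P_\Omega(z^k)\|^2,
\]
giving both Fej\'er-monotonicity and, since the coefficients are bounded below by positive constants through $a\le\lambda_k\le b$ and $c\le\alpha_k\le d$, the telescoping that proves (b); alternatively $\|x^k-P_\Omega(z^k)\|\to0$ follows from Lemma~\ref{Lemma:Schu} applied to $v^k=x^k-u$ and $w^k=P_\Omega(z^k)-u$.

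Step (c) is the crux. Boundedness of $\{x^k\}$ gives a subsequence $x^{k_j}\rightharpoonup\bar x$, and from (b) also $y^{k_j}\rightharpoonup\bar x$ and $P_\Omega(z^{k_j})\rightharpoonup\bar x$; since $P_\Omega(z^{k_j})\in\Omega$ and $\Omega$ is weakly closed, $\bar x\in\Omega$. To show $\bar x\in SOL(C,f)$ I would invoke the maximal monotone operator $S$ of (\ref{eq:maximal-S}) with the choices $D=C$, $h=f$, for which $0\in S(\bar x)$ is equivalent to $\bar x\in SOL(C,f)$. Taking any $(w,v)\in G(S)$, so that $v-f(w)\in N_C(w)$, and combining the defining inequality of the normal cone with the projection inequality (\ref{eq:ProjP1}) for $y^k$ and the monotonicity of $f$, one obtains $\langle v,y^k-w\rangle\le\langle f(y^k)-f(x^k),y^k-w\rangle+\lambda_k^{-1}\langle x^k-y^k,y^k-w\rangle$; letting $j\to\infty$ along the subsequence, both right-hand terms vanish (using Condition~\ref{Condition:b}, $\|x^{k_j}-y^{k_j}\|\to0$, $\lambda_{k_j}\ge a>0$, and boundedness), so $\langle v,\bar x-w\rangle\le0$, i.e.\ $\langle v-0,w-\bar x\rangle\ge0$ for all $(w,v)\in G(S)$; maximality then forces $0\in S(\bar x)$.

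Finally, for (d), Lemma~\ref{Lemma:Takahashi} gives $P_D(x^k)\to z\in D$ strongly. The Opial condition, applied to the convergent sequences $\{\|x^k-u\|\}$ for $u\in D$, shows that $\{x^k\}$ has a unique weak cluster point, hence $x^k\rightharpoonup\bar x\in D$; then passing to the limit in the characterization $\langle x^k-P_D(x^k),P_D(x^k)-\bar x\rangle\ge0$ (an inner product of a weakly convergent sequence against a strongly convergent one) gives $-\|\bar x-z\|^2\ge0$, so $\bar x=z$. Thus $x^k\rightharpoonup z$, and $\|x^k-y^k\|\to0$ gives $y^k\rightharpoonup z$ as well, with $z=\lim_k P_D(x^k)$. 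The main obstacle is Step (c): getting the sign bookkeeping right in the quasi-Fej\'er estimate (the swap from $f(y^k)$ to $f(x^k)-f(y^k)$ via $z^k\in T_k$, which is exactly what makes this subgradient-projection variant of the extragradient method work) and correctly passing the maximal-monotonicity inequality to the weak limit.
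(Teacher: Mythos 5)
Your argument is correct, but it is worth noting that the paper itself does not prove this theorem from scratch: its entire ``proof'' is a citation of \cite[Theorem 7.1]{CGR2} (the convergence theorem for the subgradient extragradient method), specialized by taking the nonexpansive operator $S$ there to be $P_{\Omega}$, together with the remark that the argument survives variable step sizes $\lambda_k$. What you have written is, in effect, a self-contained reconstruction of that outsourced proof, and every step checks out: the one-step estimate $\Vert z^k-u\Vert^2\le\Vert x^k-u\Vert^2-(1-\lambda_k^2\kappa^2)\Vert x^k-y^k\Vert^2$ is exactly the subgradient-extragradient inequality (your bookkeeping of the swap from $f(y^k)$ to $f(x^k)-f(y^k)$ via $z^k\in T_k$ and the discarding of $\langle f(y^k),u-y^k\rangle\le 0$ via monotonicity and $u\in SOL(C,f)$ is right); combining it with (\ref{eq:ConvexComb}) and the nonexpansivity of $P_\Omega$ gives Fej\'er monotonicity plus the two vanishing residuals; the maximal-monotonicity passage to the limit correctly identifies weak cluster points as elements of $SOL(C,f)$, with $\bar x\in\Omega$ coming from weak closedness; and the Opial/Tak­ahashi endgame identifying the weak limit with $\lim_k P_{\Omega\cap SOL(C,f)}(x^k)$ is the standard one. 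Two small points you should make explicit if you write this up: (i) Lemma \ref{Lemma:Takahashi} and the projection $P_{\Omega\cap SOL(C,f)}$ require $SOL(C,f)$ to be closed and convex, which holds for continuous monotone $f$ by Minty's characterization but deserves a sentence; (ii) the convergence obtained (and claimed by the cited theorem) is weak convergence, so the word ``converge'' in the statement should be read in that sense. The trade-off between the two routes is clear: the paper's citation is economical but opaque, while your version exposes exactly where each hypothesis ($a\le\lambda_k\le b<1/\kappa$, $c\le\alpha_k\le d$, Conditions \ref{Condition:a}--\ref{Condition:c}) enters, and in particular makes visible that only the half-space projection $P_{T_k}$, not a second projection onto $C$, is needed.
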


\begin{proof}
For the special case of fixed $\lambda_{k}=\tau$ for all $k\geq0$ this theorem
is a direct consequence of our \cite[Theorem 7.1]{CGR2} with the choice of the
nonexpansive operator $S$ there to be $P_{\Omega}$. However, a careful
inspection of the proof of \cite[Theorem 7.1]{CGR2} reveals that it also
applies to a variable sequence $\left\{  \lambda_{k}\right\}  _{k=0}^{\infty}$
as used here.
\end{proof}

To relate our results to some previously published works we mention two lines
of research related to our notion of the CVIP. Takahashi and Nadezhkina
\cite{Nadezhkina} proposed an algorithm for finding a point $x^{\ast}%
\in\operatorname*{Fix}(N)\cap$SOL$(C,f),$ where $N:C\rightarrow C$ is a
nonexpansive operator. The iterative step of their algorithm is as
follows.\textbf{ }Given the current iterate $x^{k},$ compute%
\begin{equation}
y^{k}=P_{C}(x^{k}-\lambda_{k}f(x^{k}))
\end{equation}
and then%
\begin{equation}
x^{k+1}=\alpha_{k}x^{k}+(1-\alpha_{k})N\left(  P_{C}(x^{k}-\lambda_{k}%
f(y^{k}))\right)  .
\end{equation}
The restriction $P_{\Omega}|_{C}$ of our $P_{\Omega}$ in (\ref{eq:3.4}) is, of
course, nonexpansive, and so it is a special case of $N$ in \cite{Nadezhkina}.
But a significant advantage of our Algorithm \ref{alg:SubExt4SVIP} lies in the
fact that we compute $P_{T_{k}}$ onto a half-space in (\ref{eq:3.4}) whereas
the authors of \cite{Nadezhkina} need to project onto the convex set $C.$
Various ways have been proposed in the literature to cope with the inherent
difficulty of calculating projections (onto closed convex sets) that do not
have a closed-form expression; see, e.g., He, Yang and Duan \cite{hyd10}, or
\cite{cgr-oms}.

Bertsekas and Tsitsiklis \cite[Page 288]{BT} consider the following problem in
Euclidean space: given $f:R^{n}\rightarrow R^{n}$, polyhedral sets
$C_{1}\subset R^{n}$ and $C_{2}\subset R^{m},$ and an $m\times n$ matrix $A$,
find a point $x^{\ast}\in C_{1}$ such that $Ax^{\ast}\in C_{2}$ and%
\begin{equation}
\left\langle f(x^{\ast}),x-x^{\ast}\right\rangle \geq0\text{ for all }x\in
C_{1}\cap\{y\mid Ay\in C_{2}\}.
\end{equation}

Denoting $\Omega=A^{-1}(C_{2})$, we see that this problem becomes similar to,
but not identical with a CVIP. While the authors of \cite{BT} seek a solution
in SOL$(C_{1}\cap\Omega,f),$ we aim in our CVIP at $\Omega\cap$SOL$(C,f).$
They propose to solve their problem by the method of multipliers, which is a
different approach than ours, and they need to assume that either $C_{1}$ is
bounded or $A^{t}A$ is invertible, where $A^{t}$ is the transpose of $A.$

\section{The split variational inequality problem as a constrained variational
inequality problem in a product space\label{sec:SVIP}}

Our first approach to the solution of the SVIP (\ref{eq:vip})--(\ref{eq:svip})
is to look at the product space $\boldsymbol{H}=H_{1}\times H_{2}$ and
introduce in it the product set $\boldsymbol{D}:=C\times Q$ and the set%
\begin{equation}
\boldsymbol{V}:=\{\mathbf{x}=(x,y)\in\boldsymbol{H}\mid Ax=y\}.
\end{equation}
We adopt the notational convention that objects in the product space are
represented in boldface type. We transform the SVIP (\ref{eq:vip}%
)--(\ref{eq:svip}) into the following equivalent CVIP in the product space:
\begin{align}
\text{Find a point }\boldsymbol{x}^{\ast}  &  \in\boldsymbol{D}\cap
\boldsymbol{V},\text{ such that }\left\langle \boldsymbol{h}(\boldsymbol{x}%
^{\ast}),\boldsymbol{x}-\boldsymbol{x}^{\ast}\right\rangle \geq0\text{
}\nonumber\\
\text{for all }\boldsymbol{x}  &  =(x,y)\in\boldsymbol{D},
\label{eq:c-as-svip}%
\end{align}
where $\boldsymbol{h}:\boldsymbol{H}\rightarrow\boldsymbol{H}$ is defined by%
\begin{equation}
\boldsymbol{h}(x,y)=(f(x),g(y)).
\end{equation}
A simple adaptation of the decomposition lemma \cite[Proposition 5.7, page
275]{BT} shows that problems (\ref{eq:vip})--(\ref{eq:svip}) and
(\ref{eq:c-as-svip}) are equivalent, and, therefore, we can apply Algorithm
\ref{alg:SubExt4SVIP} to the solution of (\ref{eq:c-as-svip}).

\begin{lemma}
A point $\boldsymbol{x}^{\ast}=(x^{\ast},y^{\ast})$ solves (\ref{eq:c-as-svip}%
) if and only if $x^{\ast}$ and $y^{\ast}$ solve (\ref{eq:vip}%
)--(\ref{eq:svip}).
\end{lemma}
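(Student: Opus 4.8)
The plan is to unfold both problems into the coordinates of the product space and observe that the equivalence is essentially a decoupling argument that relies crucially on $\boldsymbol{D}$ being a \emph{product} set $C\times Q$. First I would record what membership $\boldsymbol{x}^{\ast}=(x^{\ast},y^{\ast})\in\boldsymbol{D}\cap\boldsymbol{V}$ means: the condition $\boldsymbol{x}^{\ast}\in\boldsymbol{D}$ splits into $x^{\ast}\in C$ and $y^{\ast}\in Q$, while $\boldsymbol{x}^{\ast}\in\boldsymbol{V}$ is precisely the constraint $y^{\ast}=Ax^{\ast}$. Next I would expand the product inner product. Using $\langle(a,b),(c,d)\rangle=\langle a,c\rangle_{H_{1}}+\langle b,d\rangle_{H_{2}}$ together with $\boldsymbol{h}(x,y)=(f(x),g(y))$, the variational inequality in (\ref{eq:c-as-svip}) reads
\begin{equation}
\langle f(x^{\ast}),x-x^{\ast}\rangle+\langle g(y^{\ast}),y-y^{\ast}\rangle\geq0\quad\text{for all }x\in C,\ y\in Q. \label{eq:unfolded}
\end{equation}

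For the implication that the SVIP solution yields a CVIP solution, I would argue directly. If $x^{\ast},y^{\ast}$ solve (\ref{eq:vip})--(\ref{eq:svip}), then $x^{\ast}\in C$, $y^{\ast}=Ax^{\ast}\in Q$, so $\boldsymbol{x}^{\ast}\in\boldsymbol{D}\cap\boldsymbol{V}$; and for any $(x,y)\in\boldsymbol{D}$ the two defining inequalities $\langle f(x^{\ast}),x-x^{\ast}\rangle\geq0$ and $\langle g(y^{\ast}),y-y^{\ast}\rangle\geq0$ simply add up to (\ref{eq:unfolded}). This direction is immediate.

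For the converse, which is where the product structure does the real work, I would start from a solution $\boldsymbol{x}^{\ast}=(x^{\ast},y^{\ast})$ of (\ref{eq:c-as-svip}) and recover the two separate VIPs by specializing the test point. Since $\boldsymbol{D}=C\times Q$ is a product, I may fix one coordinate at its optimal value and still stay in $\boldsymbol{D}$: choosing $y=y^{\ast}\in Q$ in (\ref{eq:unfolded}) kills the second term and leaves $\langle f(x^{\ast}),x-x^{\ast}\rangle\geq0$ for all $x\in C$, which is (\ref{eq:vip}); symmetrically, choosing $x=x^{\ast}\in C$ leaves $\langle g(y^{\ast}),y-y^{\ast}\rangle\geq0$ for all $y\in Q$, and since $y^{\ast}=Ax^{\ast}$ from $\boldsymbol{x}^{\ast}\in\boldsymbol{V}$, this is exactly (\ref{eq:svip}). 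The only point demanding care is this specialization step: it is legitimate precisely because $\boldsymbol{D}$ factors as $C\times Q$, so that the admissible test directions in the two coordinates are independent. There is no genuine analytic obstacle here; the content of the lemma is entirely this structural decoupling, and I would flag that the argument would fail verbatim if $\boldsymbol{D}$ were a non-product constraint set.
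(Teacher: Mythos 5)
Your proof is correct and follows essentially the same route as the paper: both directions are handled identically, and in particular the converse is obtained by fixing one coordinate at its optimal value (testing with $(x^{\ast},y)$ and $(x,y^{\ast})$) to decouple the two inequalities. Your explicit unfolding of the product inner product and your remark on why the product structure of $\boldsymbol{D}$ is essential are welcome clarifications, but they do not change the argument.
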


\begin{proof}
If $(x^{\ast},y^{\ast})$ solves (\ref{eq:vip})--(\ref{eq:svip}), then it is
clear that $(x^{\ast},y^{\ast})$ solves (\ref{eq:c-as-svip}). To prove the
other direction, suppose that $(x^{\ast},y^{\ast})$ solves (\ref{eq:c-as-svip}%
). Since (\ref{eq:c-as-svip}) holds for all $(x,y)\in\boldsymbol{D}$, we may
take $(x^{\ast},y)\in\boldsymbol{D}$ and deduce that%
\begin{equation}
\left\langle g(Ax^{\ast}),y-Ax^{\ast}\right\rangle \geq0\text{ for all }y\in
Q.
\end{equation}
Using a similar argument with $(x,y^{\ast})\in\boldsymbol{D,}$ we get%
\begin{equation}
\left\langle f(x^{\ast}),x-x^{\ast}\right\rangle \geq0\text{ for all }x\in C,
\end{equation}
which means that $(x^{\ast},y^{\ast})$ solves (\ref{eq:vip})--(\ref{eq:svip}).
\end{proof}

Using this equivalence, we can now employ Algorithm \ref{alg:SubExt4SVIP} in
order to solve the SVIP. The following conditions are needed for the
convergence theorem.

\begin{condition}
\label{Condition:a2} $f$ is monotone on $C$ and $g$ is monotone on $Q$.
\end{condition}

\begin{condition}
\label{Condition:b2} $f$ is Lipschitz continuous on $H_{1}$ with constant
$\kappa_{1}>0$ and $g$ is Lipschitz continuous on $H_{2}$ with constant
$\kappa_{2}>0.$
\end{condition}

\begin{condition}
\label{Condition:c2} $\boldsymbol{V}\cap SOL(\boldsymbol{D},\boldsymbol{h}%
)\neq\emptyset.$
\end{condition}

Let $\left\{  \lambda_{k}\right\}  _{k=0}^{\infty}\subset\left[  a,b\right]
$\ for some $a,b\in(0,1/\kappa)$, where $\kappa=\min\{\kappa_{1},\kappa_{2}%
\}$, and let $\left\{  \alpha_{k}\right\}  _{k=0}^{\infty}\subset\left[
c,d\right]  $ for some\textit{ }$c,d\in(0,1)$. Then the following algorithm
generates two sequences that converge to a point $\boldsymbol{z}%
\in\boldsymbol{V}\cap SOL(\boldsymbol{D},\boldsymbol{h}),$ as the convergence
theorem given below shows.

\begin{algorithm}
\label{alg:SubExt4SVIP2}$\left.  {}\right.  $

\textbf{Initialization:} Select an arbitrary starting point $\boldsymbol{x}%
^{0}\in\boldsymbol{H}$.

\textbf{Iterative step:} Given the current iterate $\boldsymbol{x}^{k},$
compute%
\begin{equation}
\boldsymbol{y}^{k}=\boldsymbol{P}_{\boldsymbol{D}}(\boldsymbol{x}^{k}%
-\lambda_{k}\boldsymbol{h}(\boldsymbol{x}^{k})),
\end{equation}
construct the half-space $\boldsymbol{T}_{k}$ the bounding hyperplane of which
supports $\boldsymbol{D}$ at $\boldsymbol{y}^{k},$%
\begin{equation}
\boldsymbol{T}_{k}:=\{\boldsymbol{w}\in\boldsymbol{H}\mid\left\langle \left(
\boldsymbol{x}^{k}-\lambda_{k}\boldsymbol{h}(\boldsymbol{x}^{k})\right)
-\boldsymbol{y}^{k},\boldsymbol{w}-\boldsymbol{y}^{k}\right\rangle \leq0\},
\end{equation}
and then calculate%
\begin{equation}
\boldsymbol{x}^{k+1}=\alpha_{k}\boldsymbol{x}^{k}+(1-\alpha_{k})\boldsymbol{P}%
_{\boldsymbol{V}}\left(  \boldsymbol{P}_{\boldsymbol{T}_{k}}(\boldsymbol{x}%
^{k}-\lambda_{k}\boldsymbol{h}(\boldsymbol{y}^{k}))\right)  . \label{eq:5.8}%
\end{equation}

\end{algorithm}

Our convergence theorem for Algorithm \ref{alg:SubExt4SVIP2} follows from
Theorem \ref{th:cvip}.

\begin{theorem}
Consider $f:H_{1}\rightarrow H_{1}$ and $g:H_{2}\rightarrow H_{2},$ a bounded
linear operator $A:H_{1}\rightarrow H_{2}$, and nonempty, closed and convex
subsets $C\subseteq H_{1}$ and $Q\subseteq H_{2}$. Assume that Conditions
\ref{Condition:a2}--\ref{Condition:c2} hold, and let $\left\{  \boldsymbol{x}%
^{k}\right\}  _{k=0}^{\infty}$ and $\left\{  \boldsymbol{y}^{k}\right\}
_{k=0}^{\infty}$ be any two sequences generated by Algorithm
\ref{alg:SubExt4SVIP2} with $\left\{  \lambda_{k}\right\}  _{k=0}^{\infty
}\subset\left[  a,b\right]  $\textit{\ for some }$a,b\in(0,1/\kappa)$\textit{,
where }$\kappa=\min\{\kappa_{1},\kappa_{2}\}$, \textit{and let} $\left\{
\alpha_{k}\right\}  _{k=0}^{\infty}\subset\left[  c,d\right]  $ for
some\textit{ }$c,d\in(0,1)$. Then $\left\{  \boldsymbol{x}^{k}\right\}
_{k=0}^{\infty}$ and $\left\{  \boldsymbol{y}^{k}\right\}  _{k=0}^{\infty}$
converge weakly to the same point $\boldsymbol{z}\in\boldsymbol{V}\cap
SOL(\boldsymbol{D},\boldsymbol{h})$ and%
\begin{equation}
\boldsymbol{z}=\lim_{k\rightarrow\infty}\boldsymbol{P}_{\boldsymbol{V}\cap
SOL(\boldsymbol{D},\boldsymbol{h})}(\boldsymbol{x}^{k}).
\end{equation}

\end{theorem}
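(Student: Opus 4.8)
The plan is to observe that Algorithm \ref{alg:SubExt4SVIP2} is literally Algorithm \ref{alg:SubExt4SVIP} transplanted into the product Hilbert space $\boldsymbol{H}=H_{1}\times H_{2}$ under the identifications $f\mapsto\boldsymbol{h}$, $C\mapsto\boldsymbol{D}=C\times Q$ and $\Omega\mapsto\boldsymbol{V}$: the iterate $\boldsymbol{y}^{k}$, the supporting half-space $\boldsymbol{T}_{k}$, and the update (\ref{eq:5.8}) are the boldface copies of their single-space counterparts. Consequently the theorem will follow at once from Theorem \ref{th:cvip}, once I check that its hypotheses, namely Conditions \ref{Condition:a}--\ref{Condition:c} read in $\boldsymbol{H}$, are supplied by the standing Conditions \ref{Condition:a2}--\ref{Condition:c2}. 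The whole proof therefore reduces to verifying monotonicity, Lipschitz continuity, and solvability for the triple $(\boldsymbol{h},\boldsymbol{D},\boldsymbol{V})$.

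First I would verify monotonicity. Writing $\boldsymbol{u}=(x_{1},y_{1})$ and $\boldsymbol{v}=(x_{2},y_{2})$ in $\boldsymbol{D}=C\times Q$, the product inner product splits as
\begin{equation}
\langle\boldsymbol{h}(\boldsymbol{u})-\boldsymbol{h}(\boldsymbol{v}),\boldsymbol{u}-\boldsymbol{v}\rangle=\langle f(x_{1})-f(x_{2}),x_{1}-x_{2}\rangle+\langle g(y_{1})-g(y_{2}),y_{1}-y_{2}\rangle,
\end{equation}
and both summands are nonnegative by Condition \ref{Condition:a2}, so $\boldsymbol{h}$ is monotone on $\boldsymbol{D}$, which is Condition \ref{Condition:a}. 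For Lipschitz continuity the same splitting of the product norm gives
\begin{equation}
\Vert\boldsymbol{h}(\boldsymbol{u})-\boldsymbol{h}(\boldsymbol{v})\Vert^{2}=\Vert f(x_{1})-f(x_{2})\Vert^{2}+\Vert g(y_{1})-g(y_{2})\Vert^{2}\leq\kappa_{1}^{2}\Vert x_{1}-x_{2}\Vert^{2}+\kappa_{2}^{2}\Vert y_{1}-y_{2}\Vert^{2}
\end{equation}
by Condition \ref{Condition:b2}, and hence $\boldsymbol{h}$ is Lipschitz continuous on all of $\boldsymbol{H}$ with constant $\max\{\kappa_{1},\kappa_{2}\}$, giving Condition \ref{Condition:b}. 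Finally, Condition \ref{Condition:c2} is word for word the statement that $\boldsymbol{V}\cap SOL(\boldsymbol{D},\boldsymbol{h})\neq\emptyset$, i.e. Condition \ref{Condition:c}.

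Having checked all three conditions for $(\boldsymbol{h},\boldsymbol{D},\boldsymbol{V})$, I would then invoke Theorem \ref{th:cvip} directly: it yields that $\{\boldsymbol{x}^{k}\}$ and $\{\boldsymbol{y}^{k}\}$ converge to a common point $\boldsymbol{z}\in\boldsymbol{V}\cap SOL(\boldsymbol{D},\boldsymbol{h})$ with $\boldsymbol{z}=\lim_{k\rightarrow\infty}\boldsymbol{P}_{\boldsymbol{V}\cap SOL(\boldsymbol{D},\boldsymbol{h})}(\boldsymbol{x}^{k})$, which is exactly the assertion. The one point demanding care is the bookkeeping on the Lipschitz constant that governs the admissible step-sizes: the contraction estimates inherited from Theorem \ref{th:cvip} are driven by the Lipschitz constant of $\boldsymbol{h}$ on the whole product space, and the computation above shows this constant to be $\max\{\kappa_{1},\kappa_{2}\}$ rather than either individual $\kappa_{i}$. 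Accordingly one must ensure the range of $\lambda_{k}$ is taken against $\kappa=\max\{\kappa_{1},\kappa_{2}\}$ (so that $\lambda_{k}\kappa<1$); apart from this, the argument is a transparent transcription of the single-space result into the product setting.
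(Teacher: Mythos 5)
Your proof is correct and follows exactly the paper's route: the paper likewise disposes of this theorem by observing that Algorithm \ref{alg:SubExt4SVIP2} is Algorithm \ref{alg:SubExt4SVIP} transplanted to the product space and invoking Theorem \ref{th:cvip}, though it does not spell out the verification of Conditions \ref{Condition:a}--\ref{Condition:c} as you do. Your remark about the Lipschitz constant is well taken: the product operator $\boldsymbol{h}$ is Lipschitz with constant $\max\{\kappa_{1},\kappa_{2}\}$, so the admissible step-size range should be $(0,1/\max\{\kappa_{1},\kappa_{2}\})$ rather than the $(0,1/\min\{\kappa_{1},\kappa_{2}\})$ appearing in the theorem statement, which looks like a slip in the paper.
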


The value of the product space approach, described above, depends on the
ability to \textquotedblleft translate\textquotedblright\ Algorithm
\ref{alg:SubExt4SVIP2} back to the original spaces $H_{1}$ and $H_{2}.$
Observe that due to \cite[Lemma 1.1]{Pierra} for $\boldsymbol{x}%
\mathbf{=}(x,y)\in\boldsymbol{D,}$ we have $\boldsymbol{P}_{\boldsymbol{D}%
}(\boldsymbol{x})=(P_{C}(x),P_{Q}(y))$ and a similar formula holds for
$\boldsymbol{P}_{\boldsymbol{T}_{k}}.$ The potential difficulty lies in
$\boldsymbol{P}_{\boldsymbol{V}}$ of (\ref{eq:5.8}). In the finite-dimensional
case, since $\boldsymbol{V}$ is a subspace, the projection onto it is easily
computable by using an orthogonal basis. For example, if $U$ is a
$k$-dimensional subspace of $R^{n}$ with the basis $\{u_{1},u_{2},...,u_{k}%
\}$, then for $x\in R^{n},$ we have%
\begin{equation}
P_{U}(x)=\sum\limits_{i=1}^{k}\frac{\left\langle x,u_{i}\right\rangle }{\Vert
u_{i}\Vert^{2}}u_{i}.
\end{equation}

\section{Solving the split variational inequality problem without a product
space\label{sec:Direct SVIP}}

In this section we present a method\ for solving the SVIP, which does not need
a product space formulation as in the previous section. Recalling that
$SOL(C,f)$ and $SOL(Q,g)$ are the solution sets of (\ref{eq:vip}) and
(\ref{eq:svip}), respectively, we see that the solution set of the SVIP is%
\begin{equation}
\Gamma:=\Gamma(C,Q,f,g,A):=\left\{  z\in SOL(C,f)\mid Az\in SOL(Q,g)\right\}
.
\end{equation}
Using the abbreviations $T:=P_{Q}(I-\lambda g)$ and $U:=P_{C}(I-\lambda f),$
we propose the following algorithm.

\begin{algorithm}
\label{Alg-SVIP}$\left.  {}\right.  $

\textbf{Initialization:} Let $\lambda>0$ and select an arbitrary starting
point $x^{0}\in H_{1}$.

\textbf{Iterative step:} Given the current iterate $x^{k},$ compute%
\begin{equation}
x^{k+1}=U(x^{k}+\gamma A^{\ast}(T-I)(Ax^{k})),
\end{equation}
where $\gamma\in(0,1/L)$, $L$ is the spectral radius of the operator $A^{\ast
}A$, and $A^{\ast}$ is the adjoint of $A$.
\end{algorithm}

The following lemma, which asserts Fej\'{e}r-monotonicity, is crucial for the
convergence theorem.

\begin{lemma}
\label{lemma:Fejer} Let $H_{1}$ and $H_{2}$ be real Hilbert spaces and let
$A:H_{1}\rightarrow H_{2}$ be a bounded linear operator. Let $f:H_{1}%
\rightarrow H_{1}$ and $g:H_{2}\rightarrow H_{2}$ be\ $\alpha_{1}$-ISM and
$\alpha_{2}$-ISM operators on $H_{1}$ and $H_{2},$ respectively, and set
$\alpha:=\min\{\alpha_{1},\alpha_{2}\}$. Assume that $\Gamma\neq\emptyset$ and
that $\gamma\in(0,1/L)$. Consider the operators $U=P_{C}(I-\lambda f)$ and
$T=P_{Q}(I-\lambda g)$ with $\lambda\in\lbrack0,2\alpha]$. Then any sequence
$\left\{  x^{k}\right\}  _{k=0}^{\infty},$ generated by Algorithm
\ref{Alg-SVIP}, is Fej\'{e}r-monotone with respect to the solution set
$\Gamma$.
\end{lemma}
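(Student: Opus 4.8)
The plan is to fix an arbitrary $z\in\Gamma$ and establish the single inequality $\|x^{k+1}-z\|\le\|x^k-z\|$ directly. First I would recast membership in $\Gamma$ in fixed-point terms: by the characterization (\ref{eq:fix-vip}), $z\in SOL(C,f)$ means exactly $z\in\operatorname*{Fix}(U)$, and $Az\in SOL(Q,g)$ means exactly $Az\in\operatorname*{Fix}(T)$, so every $z\in\Gamma$ satisfies $Uz=z$ and $T(Az)=Az$. Since $\lambda\in[0,2\alpha]$ with $\alpha=\min\{\alpha_1,\alpha_2\}$, we have both $\lambda\in[0,2\alpha_1]$ and $\lambda\in[0,2\alpha_2]$, so Lemma \ref{lemma:Mod-proj}(i) guarantees that $U=P_C(I-\lambda f)$ and $T=P_Q(I-\lambda g)$ are nonexpansive; in particular each is quasi-nonexpansive with respect to its own fixed-point set.

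Writing $v^k:=x^k+\gamma A^{\ast}(T-I)(Ax^k)$, so that $x^{k+1}=U(v^k)$, nonexpansiveness of $U$ together with $Uz=z$ gives $\|x^{k+1}-z\|=\|U(v^k)-U(z)\|\le\|v^k-z\|$, so it suffices to show $\|v^k-z\|\le\|x^k-z\|$. Abbreviating $w^k:=(T-I)(Ax^k)$ and using $\langle x^k-z,A^{\ast}w^k\rangle=\langle Ax^k-Az,w^k\rangle$, I would expand
\[
\|v^k-z\|^2=\|x^k-z\|^2+2\gamma\langle Ax^k-Az,w^k\rangle+\gamma^2\|A^{\ast}w^k\|^2 ,
\]
and the whole argument reduces to controlling the last two terms.

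For the quadratic term, since $L$ is the spectral radius, hence the operator norm, of the self-adjoint positive operator $A^{\ast}A$, one has $\|A^{\ast}w^k\|^2=\langle AA^{\ast}w^k,w^k\rangle\le L\|w^k\|^2$. For the cross term, I would invoke the quasi-nonexpansiveness of $T$ at $Ax^k$ with fixed point $Az$: from $\|T(Ax^k)-Az\|\le\|Ax^k-Az\|$, expanding $T(Ax^k)-Az=(Ax^k-Az)+w^k$ and simplifying yields $\langle Ax^k-Az,w^k\rangle\le-\tfrac12\|w^k\|^2$. Substituting both estimates gives
\[
\|v^k-z\|^2\le\|x^k-z\|^2+\gamma(\gamma L-1)\|w^k\|^2 ,
\]
and since $\gamma\in(0,1/L)$ forces $\gamma L-1<0$ while $\gamma>0$, the coefficient $\gamma(\gamma L-1)$ is nonpositive. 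Hence $\|v^k-z\|\le\|x^k-z\|$, and combined with the reduction above this proves Fej\'{e}r-monotonicity.

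I expect the crux to be the cross-term estimate, because it is what converts the perturbation $\gamma A^{\ast}(T-I)A$ into a quantity that genuinely decreases the distance to $\Gamma$; the argument hinges on pairing the quasi-nonexpansiveness of $T$ (which controls $\langle Ax^k-Az,w^k\rangle$) against the spectral bound $\|A^{\ast}w^k\|^2\le L\|w^k\|^2$, so that the step-size restriction $\gamma<1/L$ exactly absorbs the positive quadratic contribution. If, in addition, the firmly-quasi-nonexpansive (cutter) inequality (\ref{eq:2.26}) is available for $T$ and $U$ via condition (\ref{eq:2.24}), the same computation sharpens the cross-term bound to $\langle Ax^k-Az,w^k\rangle\le-\|w^k\|^2$ and contributes a further $-\|x^{k+1}-v^k\|^2$ term, refinements that the subsequent convergence theorem will want in order to force $\|w^k\|\to0$.
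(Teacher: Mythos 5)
Your proposal is correct and follows essentially the same route as the paper: apply the nonexpansiveness of $U$ (Lemma \ref{lemma:Mod-proj}(i)) to reduce to the auxiliary point $x^{k}+\gamma A^{\ast}(T-I)(Ax^{k})$, bound the quadratic term via the spectral radius of $A^{\ast}A$, and bound the cross term by $-\tfrac12\Vert (T-I)(Ax^{k})\Vert^{2}$ using the fixed point $Az$ of $T$ (your derivation from quasi-nonexpansiveness is equivalent to the paper's use of (\ref{eq:Ne(Crombez)})), so that $\gamma(L\gamma-1)\leq0$ closes the argument. Your closing aside about the sharper cutter bound is accurate but not needed for the lemma.
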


\begin{proof}
Let $z\in\Gamma.$ Then $z\in SOL(C,f)$ and, therefore, by (\ref{eq:fix-vip})
and Lemma \ref{lemma:Mod-proj}(i), we get%
\begin{align}
\left\Vert x^{k+1}-z\right\Vert ^{2}  &  =\left\Vert U\left(  x^{k}+\gamma
A^{\ast}(T-I)(Ax^{k})\right)  -z\right\Vert ^{2}\nonumber\\
&  =\left\Vert U\left(  x^{k}+\gamma A^{\ast}(T-I)(Ax^{k})\right)
-U(z)\right\Vert ^{2}\nonumber\\
&  \leq\left\Vert x^{k}+\gamma A^{\ast}(T-I)(Ax^{k})-z\right\Vert
^{2}\nonumber\\
&  =\left\Vert x^{k}-z\right\Vert ^{2}+\gamma^{2}\left\Vert A^{\ast
}(T-I)(Ax^{k})\right\Vert ^{2}\nonumber\\
&  +2\gamma\left\langle x^{k}-z,A^{\ast}(T-I)(Ax^{k})\right\rangle .
\end{align}
Thus%
\begin{align}
\left\Vert x^{k+1}-z\right\Vert ^{2}  &  \leq\left\Vert x^{k}-z\right\Vert
^{2}+\gamma^{2}\left\langle (T-I)(Ax^{k}),AA^{\ast}(T-I)(Ax^{k})\right\rangle
\nonumber\\
&  +2\gamma\left\langle x^{k}-z,A^{\ast}(T-I)(Ax^{k})\right\rangle .
\label{P0}%
\end{align}
From the definition of $L$ it follows, by standard manipulations, that
\begin{align}
\gamma^{2}\left\langle (T-I)(Ax^{k}),AA^{\ast}(T-I)(Ax^{k})\right\rangle  &
\leq L\gamma^{2}\left\langle (T-I)(Ax^{k}),(T-I)(Ax^{k})\right\rangle
\nonumber\\
&  =L\gamma^{2}\left\Vert (T-I)(Ax^{k})\right\Vert ^{2}. \label{P1}%
\end{align}
Denoting $\Theta:=2\gamma\left\langle x^{k}-z,A^{\ast}(T-I)(Ax^{k}%
)\right\rangle $ and using (\ref{eq:Ne(Crombez)}), we obtain%
\begin{align}
\Theta &  =2\gamma\left\langle A(x^{k}-z),(T-I)(Ax^{k})\right\rangle
\nonumber\\
&  =2\gamma\left\langle A(x^{k}-z)+(T-I)(Ax^{k})-(T-I)(Ax^{k}),(T-I)(Ax^{k}%
)\right\rangle \nonumber\\
&  =2\gamma\left(  \left\langle T(Ax^{k})-Az,(T-I)(Ax^{k})\right\rangle
-\left\Vert (T-I)(Ax^{k})\right\Vert ^{2}\right) \nonumber\\
&  \leq2\gamma\left(  (1/2)\left\Vert (T-I)(Ax^{k})\right\Vert ^{2}-\left\Vert
(T-I)(Ax^{k})\right\Vert ^{2}\right) \nonumber\\
&  \leq-\gamma\left\Vert (T-I)(Ax^{k})\right\Vert ^{2}. \label{P2}%
\end{align}
Applying (\ref{P1}) and (\ref{P2}) to (\ref{P0}), we see that%
\begin{equation}
\left\Vert x^{k+1}-z\right\Vert ^{2}\leq\left\Vert x^{k}-z\right\Vert
^{2}+\gamma(L\gamma-1)\left\Vert (T-I)(Ax^{k})\right\Vert ^{2}. \label{P3}%
\end{equation}
From the definition of $\gamma,$ we get%
\begin{equation}
\left\Vert x^{k+1}-z\right\Vert ^{2}\leq\left\Vert x^{k}-z\right\Vert ^{2},
\label{eq:x_k-z}%
\end{equation}
which completes the proof.
\end{proof}

Now we present our convergence result for Algorithm \ref{Alg-SVIP}.

\begin{theorem}
\label{Theorem1} Let $H_{1}$ and $H_{2}$ be real Hilbert spaces and let
$A:H_{1}\rightarrow H_{2}$ be a bounded linear operator. Let $f:H_{1}%
\rightarrow H_{1}$ and $g:H_{2}\rightarrow H_{2}$ be\ $\alpha_{1}$-ISM and
$\alpha_{2}$-ISM operators on $H_{1}$ and $H_{2},$ respectively, and set
$\alpha:=\min\{\alpha_{1},\alpha_{2}\}$. Assume that $\gamma\in(0,1/L)$.
Consider the operators $U=P_{C}(I-\lambda f)$ and $T=P_{Q}(I-\lambda g)$ with
$\lambda\in\lbrack0,2\alpha]$. Assume further that $\Gamma\neq\emptyset$ and
that, for all $x^{\ast}\in SOL(C,f),$%
\begin{equation}
\langle f(x),P_{C}(I-\lambda f)(x)-x^{\ast}\rangle\geq0\text{ for all\ }x\in
H_{1}. \label{eq:5.9}%
\end{equation}
Then any sequence $\left\{  x^{k}\right\}  _{k=0}^{\infty},$ generated by
Algorithm \ref{Alg-SVIP}, converges weakly to a solution point $x^{\ast}%
\in\Gamma$.
\end{theorem}

\begin{proof}
Let $z\in\Gamma.$ It follows from (\ref{eq:x_k-z}) that the sequence $\left\{
\left\Vert x^{k}-z\right\Vert \right\}  _{k=0}^{\infty}$ is monotonically
decreasing and therefore convergent, which shows, by (\ref{P3}), that,%
\begin{equation}
\lim_{k\rightarrow\infty}\left\Vert (T-I)(Ax^{k})\right\Vert =0. \label{P4}%
\end{equation}
Fej\'{e}r-monotonicity implies that $\left\{  x^{k}\right\}  _{k=0}^{\infty}$
is bounded, so it has a weakly convergent subsequence $\left\{  x^{k_{j}%
}\right\}  _{j=0}^{\infty}$ such that $x^{k_{j}}\rightharpoonup x^{\ast}$. By
the assumptions on $\lambda$ and $g,$ we get from Lemma \ref{lemma:Mod-proj}%
(i) that $T$ is nonexpansive. Applying the demiclosedness of $T-I$ at $0$ to
(\ref{P4}), we obtain%
\begin{equation}
T(Ax^{\ast})=Ax^{\ast}, \label{P5}%
\end{equation}
which means that $Ax^{\ast}\in SOL(Q,g)$. Denote%
\begin{equation}
u^{k}:=x^{k}+\gamma A^{\ast}(T-I)(Ax^{k})\text{.}%
\end{equation}
Then%
\begin{equation}
u^{k_{j}}=x^{k_{j}}+\gamma A^{\ast}(T-I)(Ax^{k_{j}}).
\end{equation}
Since $x^{k_{j}}\rightharpoonup x^{\ast},$ (\ref{P4}) implies that $u^{k_{j}%
}\rightharpoonup x^{\ast}$ too. It remains to be shown that $x^{\ast}\in
SOL(C,f)$. Assume, by negation, that $x^{\ast}\notin SOL(C,f),$ i.e.,
$Ux^{\ast}\neq x^{\ast}.$ By the assumptions on $\lambda$ and $f,$ we get from
Lemma \ref{lemma:Mod-proj}(i) that $U$ is nonexpansive and, therefore, $U-I$
is demiclosed at $0$. So, the negation assumption must lead to%
\begin{equation}
\lim_{j\rightarrow\infty}\left\Vert U(u^{k_{j}})-u^{k_{j}}\right\Vert \neq0.
\end{equation}
Therefore, there exists an $\varepsilon>0$ and a subsequence $\left\{
u^{k_{j_{s}}}\right\}  _{s=0}^{\infty}$ of $\left\{  u^{k_{j}}\right\}
_{j=0}^{\infty}$ such that%
\begin{equation}
\left\Vert U(u^{k_{j_{s}}})-u^{k_{j_{s}}}\right\Vert >\varepsilon\text{ for
all }s\geq0. \label{formula morethanDelta}%
\end{equation}
Condition (\ref{eq:5.9}) justifies the use of Lemma \ref{lemma:Mod-proj} by
supplying (\ref{eq:2.24}). Therefore, inequality (\ref{eq:2.26}) now yields,
for all $s\geq0,$%
\begin{align}
\left\Vert U(u^{k_{j_{s}}})-U(z)\right\Vert ^{2}  &  =\left\Vert
U(u^{k_{j_{s}}})-z\right\Vert ^{2}\leq\left\Vert u^{k_{j_{s}}}-z\right\Vert
^{2}-\left\Vert U(u^{k_{j_{s}}})-u^{k_{j_{s}}}\right\Vert ^{2}\nonumber\\
&  <\left\Vert u^{k_{j_{s}}}-z\right\Vert ^{2}-\varepsilon^{2}.
\label{formula withsqdelta}%
\end{align}
By arguments similar to those in the proof of Lemma \ref{lemma:Fejer}, we have%
\begin{equation}
\left\Vert u^{k}-z\right\Vert =\left\Vert \left(  x^{k}+\gamma A^{\ast
}(T-I)(Ax^{k})\right)  -z\right\Vert \leq\left\Vert x^{k}-z\right\Vert .
\label{P6}%
\end{equation}
Since $U$ is nonexpansive,%
\begin{equation}
\left\Vert x^{k+1}-z\right\Vert =\left\Vert U(u^{k})-z\right\Vert
\leq\left\Vert u^{k}-z\right\Vert . \label{P7}%
\end{equation}
Combining (\ref{P6}) and (\ref{P7}), we get%
\begin{equation}
\left\Vert x^{k+1}-z\right\Vert \leq\left\Vert u^{k}-z\right\Vert
\leq\left\Vert x^{k}-z\right\Vert , \label{P8}%
\end{equation}
which means that the sequence $\{x^{1},u^{1},x^{2},u^{2},\ldots\}$ is
Fej\'{e}r-monotone with respect to $\Gamma.$ Since $x^{k_{j_{s+1}}%
}=U(u^{k_{j_{s}}})$, we obtain%
\begin{equation}
\left\Vert u^{k_{j_{s+1}}}-z\right\Vert ^{2}\leq\left\Vert u^{k_{j_{s}}%
}-z\right\Vert ^{2}.
\end{equation}
Hence $\left\{  u^{k_{j_{s}}}\right\}  _{s=0}^{\infty}$ is also
Fej\'{e}r-monotone with respect to $\Gamma.$ Now, (\ref{formula withsqdelta})
and (\ref{P8}) imply that%
\begin{equation}
\left\Vert u^{k_{j_{s+1}}}-z\right\Vert ^{2}<\left\Vert u^{k_{j_{s}}%
}-z\right\Vert ^{2}-\varepsilon^{2}\text{ for all }s\geq0,
\end{equation}
which leads to a contradiction. Therefore $x^{\ast}\in SOL(C,f)$ and finally,
$x^{\ast}\in\Gamma$. Since the subsequence$\ \left\{  x^{k_{j}}\right\}
_{j=0}^{\infty}$ was arbitrary, we get that $x^{k}\rightharpoonup x^{\ast}.$
\end{proof}

Relations of our results to some previously published works are as follows. In
\cite{CS08a} an algorithm for the Split Common Fixed Point Problem (SCFPP) in
Euclidean spaces was studied. Later Moudafi \cite{Moudafi} presented a similar
result for Hilbert spaces. In this connection, see also \cite{Masad+Reich}.

To formulate the SCFPP, let $H_{1}$ and $H_{2}$ be two real Hilbert spaces.
Given operators $U_{i}:H_{1}\rightarrow H_{1}$, $i=1,2,\ldots,p,$ and
$T_{j}:H_{2}\rightarrow H_{2},$ $j=1,2,\ldots,r,$ with nonempty fixed point
sets $C_{i},$ $i=1,2,\ldots,p,$ and $Q_{j},$ $j=1,2,\ldots,r,$ respectively,
and a bounded linear operator $A:H_{1}\rightarrow H_{2}$, the SCFPP is
formulated as follows:%
\begin{equation}
\text{find a point }x^{\ast}\in C:=\cap_{i=1}^{p}C_{i}\text{ such that
}Ax^{\ast}\in Q:=\cap_{j=1}^{r}Q_{j}.
\end{equation}

Our result differs from\ those in \cite{CS08a} and \cite{Moudafi} in several
ways. Firstly, the spaces in which the problems are formulated. Secondly, the
operators $U$ and $T$ in \cite{CS08a} are assumed to be firmly
quasi-nonexpansive (FQNE; see the comments after Lemma \ref{lemma:Mod-proj}
above), where in our case here only $U$ is FQNE, while $T$ is just
nonexpansive. Lastly, Moudafi \cite{Moudafi} obtains weak convergence for a
wider class of operators, called demicontractive. The iterative step of his
algorithm is
\begin{equation}
x^{k+1}=(1-\alpha_{k})u^{k}+\alpha_{k}U(u^{k}),
\end{equation}
where $u^{k}:=x^{k}+\gamma A^{\ast}(T-I)(Ax^{k})$ for $\alpha_{k}\in(0,1).$ If
$\alpha_{k}=1,$ which is not allowed there, were possible, then the iterative
step of \cite{Moudafi} would coincide with that of \cite{CS08a}.

\subsection{A parallel algorithm for solving the multiple set split
variational inequality problem}

We extend the SVIP to the \textit{Multiple Set Split Variational Inequality
Problem} (MSSVIP), which is formulated as follows. Let $H_{1}$ and $H_{2}$ be
two real Hilbert spaces. Given a bounded linear operator $A:H_{1}\rightarrow
H_{2}$, functions $f_{i}:H_{1}\rightarrow H_{1},$ $i=1,2,\ldots,p,$ and
$g_{j}:H_{2}\rightarrow H_{2},$ $j=1,2,\ldots,r$, and nonempty, closed and
convex subsets $C_{i}\subseteq H_{1},$ $Q_{j}\subseteq H_{2}$ for
$i=1,2,\ldots,p$ and $j=1,2,\ldots,r$, respectively, the Multiple Set Split
Variational Inequality Problem (MSSVIP) is formulated as follows:%
\begin{equation}
\left\{
\begin{array}
[c]{l}%
\text{find a point }x^{\ast}\in C:=\cap_{i=1}^{p}C_{i}\text{ such that
}\left\langle f_{i}(x^{\ast}),x-x^{\ast}\right\rangle \geq0\text{ for all
}x\in C_{i}\\
\text{and for all }i=1,2,\ldots,p, \text{ and such that}\\
\text{the point }y^{\ast}=Ax^{\ast}\in Q:=\cap_{i=1}^{r}Q_{j}\text{ solves
}\left\langle g_{j}(y^{\ast}),y-y^{\ast}\right\rangle \geq0\text{ for all
}y\in Q_{j}\text{ }\\
\text{and for all }j=1,2,\ldots,r.
\end{array}
\right.  \label{eq:mssvip}%
\end{equation}

For the MSSVIP we do not yet have a solution approach which does not use a
product space formalism. Therefore we present a simultaneous algorithm for the
MSSVIP the analysis of which is carried out via a certain product space. Let
$\Psi$ be the solution set of the MSSVIP:%
\begin{equation}
\Psi:=\left\{  z\in\cap_{i=1}^{p}SOL(C_{i},f_{i})\mid Az\in\cap_{i=1}%
^{r}SOL(Q_{j},g_{j})\right\}  .
\end{equation}
We introduce the spaces $\boldsymbol{W}_{1}:\mathbf{=}H_{1}$ and
$\boldsymbol{W}_{2}:=H_{1}^{p}\times H_{2}^{r},$ where $r$ and $p$ are the
indices in (\ref{eq:mssvip}). Let $\left\{  \alpha_{i}\right\}  _{i=1}^{p}$
and $\left\{  \beta_{j}\right\}  _{j=1}^{r}$ be positive real numbers. Define
the following sets in their respective spaces:%
\begin{align}
\boldsymbol{C}  &  \mathbf{:}=H_{1}\text{ \ and \ \ \label{eq:prod}}\\
\boldsymbol{Q}  &  \mathbf{:}=\left(  \prod_{i=1}^{p}\sqrt{\alpha_{i}}%
C_{i}\right)  \times\left(  \prod_{j=1}^{r}\sqrt{\beta_{j}}Q_{j}\right)  ,
\end{align}
and the operator
\begin{equation}
\boldsymbol{A}\mathbf{:}=\left(  \sqrt{\alpha_{1}}I,\ldots,\sqrt{\alpha_{p}%
}I,\sqrt{\beta_{1}}A^{\ast},\ldots,\sqrt{\beta_{r}}A^{\ast}\right)  ^{\ast},
\end{equation}
where $A^{\ast}$ stands for adjoint of $A$. Denote $U_{i}:=P_{C_{i}}(I-\lambda
f_{i})$ and $T_{j}:=P_{Q_{j}}(I-\lambda g_{j})$ for $i=1,2,\ldots,p$ and
$j=1,2,\ldots,r$, respectively$.$ Define the operator $\boldsymbol{T}%
:\boldsymbol{W}_{2}\mathbf{\rightarrow}\boldsymbol{W}_{2}$ by%
\begin{align}
\boldsymbol{T}\mathbf{(}\boldsymbol{y}\mathbf{)}  &  =\boldsymbol{T}\left(
\begin{array}
[c]{c}%
y_{1}\\
y_{2}\\
\vdots\\
y_{p+r}%
\end{array}
\right) \nonumber\\
&  =\left(  \left(  U_{1}\left(  y_{1}\right)  \right)  ^{\ast},\ldots,\left(
U_{p}\left(  y_{p}\right)  \right)  ^{\ast},\left(  T_{1}\left(
y_{p+1}\right)  \right)  ^{\ast},\ldots,\left(  T_{r}(y_{p+r})\right)  ^{\ast
}\right)  ^{\ast}, \label{eq:fat}%
\end{align}
where $y_{1},y_{2},...,y_{p}\in H_{1}$ and $y_{p+1},y_{p+2},...,y_{p+r}\in
H_{2}$.

This leads to an SVIP with just two operators $\boldsymbol{F}$ and
$\boldsymbol{G}$ and two sets $\boldsymbol{C}$ and $\boldsymbol{Q},$
respectively, in the product space, when we take $\boldsymbol{C}%
\mathbf{=}H_{1}$, $\boldsymbol{F}\equiv\boldsymbol{0},$ $\boldsymbol{Q}%
\mathbf{\subseteq}\boldsymbol{W}_{2}$, $\boldsymbol{G}\mathbf{(}%
\boldsymbol{y}\mathbf{)}=\left(  f_{1}(y_{1}),f_{2}(y_{2})\ldots,f_{p}%
(y_{p}),g_{1}(y_{p+1}),g_{2}(y_{p+2}),\ldots,g_{r}(y_{p+r})\right)  ,$ and the
operator $\boldsymbol{A}:H_{1}\mathbf{\rightarrow}\boldsymbol{W}_{2}$. It is
easy to verify that the following equivalence holds:%
\begin{equation}
x\in\Psi\text{ if and only if }\boldsymbol{A}x\in\boldsymbol{Q}\mathbf{.}%
\end{equation}
Therefore we may apply Algorithm \ref{Alg-SVIP},%
\begin{equation}
x^{k+1}=x^{k}+\gamma\boldsymbol{A}^{\ast}(\boldsymbol{T}-\boldsymbol{I}%
)(\boldsymbol{A}x^{k})\text{ for all }k\geq0, \label{itstepinPS}%
\end{equation}
to the problem (\ref{eq:prod})--(\ref{eq:fat}) in order to obtain a solution
of the original MSSVIP. We translate the iterative step (\ref{itstepinPS}) to
the original spaces $H_{1}$ and $H_{2}$ using the relation%
\begin{equation}
\boldsymbol{T}\mathbf{(}\boldsymbol{A}x)=\left(  \sqrt{\alpha_{1}}%
U_{1}(x),\ldots,\sqrt{\alpha_{p}}U_{p}(x),\sqrt{\beta_{1}}AT_{1}%
(x),\ldots,\sqrt{\beta_{r}}AT_{r}(x)\right)  ^{\ast}%
\end{equation}
and obtain the following algorithm.

\begin{algorithm}
\label{Nirits alg}$\left.  {}\right.  $

\textbf{Initialization:}$\ $Select an arbitrary starting point $x^{0}\in
H_{1}$.

\textbf{Iterative step: }Given the current iterate $x^{k},$ compute%
\begin{equation}
x^{k+1}=x^{k}+\gamma\left(  \sum_{i=1}^{p}\alpha_{i}(U_{i}-I)(x^{k}%
)+\sum_{j=1}^{r}\beta_{j}A^{\ast}(T_{j}-I)(Ax^{k})\right)  ,
\end{equation}
where $\gamma\in(0,1/L),$ with $L=\sum_{i=1}^{p}\alpha_{i}+\sum_{j=1}^{r}%
\beta_{j}\Vert A\Vert^{2}$.
\end{algorithm}

The following convergence result follows from Theorem \ref{Theorem1}.

\begin{theorem}
\label{Theorem1*} Let $H_{1}$ and $H_{2}$ be two real Hilbert spaces and let
$A:H_{1}\rightarrow H_{2}$ be a bounded linear operator. Let $f_{i}%
:H_{1}\rightarrow H_{1},$ $i=1,2,\ldots,p,$ and $g_{j}:H_{2}\rightarrow
H_{2},$ $j=1,2,\ldots,r$, be $\alpha$-ISM operators on nonempty, closed and
convex subsets $C_{i}\subseteq H_{1},$ $Q_{j}\subseteq H_{2}$ for
$i=1,2,\ldots,p,$ and $j=1,2,\ldots,r$, respectively. Assume that\textbf{
}$\gamma\in(0,1/L)$ and $\Psi\neq\emptyset$\textbf{.} Set $U_{i}:=P_{C_{i}%
}(I-\lambda f_{i})$ and $T_{j}:=P_{Q_{j}}(I-\lambda g_{j})$ for $i=1,2,\ldots
,p$ and $j=1,2,\ldots,r$, respectively, with $\lambda\in\lbrack0,2\alpha]$.
If, in addition, for each $i=1,2,\ldots,p$ and $j=1,2,\ldots,r$ we have
\begin{equation}
\langle f_{i}(x),P_{C_{i}}(I-\lambda f_{i})(x)-x^{\ast}\rangle\geq0\text{ for
all\ }x\in H_{1} \label{eq:cond1}%
\end{equation}
for all $x^{\ast}\in SOL(C_{i},f_{i})$ and
\begin{equation}
\langle g_{j}(x),P_{Q_{j}}(I-\lambda g_{j})(x)-x^{\ast}\rangle\geq0\text{ for
all\ }x\in H_{2}, \label{eq:cond2}%
\end{equation}
for all $x^{\ast}\in SOL(C_{i},f_{i})$, then any sequence $\left\{
x^{k}\right\}  _{k=0}^{\infty},$ generated by Algorithm \ref{Alg-SVIP},
converges weakly to a solution point $x^{\ast}\in\Psi$.
\end{theorem}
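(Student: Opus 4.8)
The plan is to obtain Theorem~\ref{Theorem1*} as a consequence of Theorem~\ref{Theorem1} by viewing the MSSVIP as a single SVIP in the product space $\boldsymbol{W}_2=H_1^{p}\times H_2^{r}$ assembled in (\ref{eq:prod})--(\ref{eq:fat}). Concretely, I would instantiate the framework of Section~\ref{sec:Direct SVIP} with the bounded linear operator $\boldsymbol{A}:H_1\rightarrow\boldsymbol{W}_2$, the range set $\boldsymbol{Q}$ and range operator $\boldsymbol{G}$, and on the domain side take $\boldsymbol{C}=H_1$ with $\boldsymbol{F}\equiv\boldsymbol{0}$, so that the domain operator $\boldsymbol{U}:=P_{\boldsymbol{C}}(I-\lambda\boldsymbol{F})=I$ while $\boldsymbol{T}:=P_{\boldsymbol{Q}}(I-\lambda\boldsymbol{G})$ is the block operator~(\ref{eq:fat}). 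With these choices the step of Algorithm~\ref{Alg-SVIP} becomes $x^{k+1}=x^{k}+\gamma\boldsymbol{A}^{\ast}(\boldsymbol{T}-\boldsymbol{I})(\boldsymbol{A}x^{k})$, namely the product-space step~(\ref{itstepinPS}).

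The heart of the reduction is the block relation
\[
\boldsymbol{T}(\boldsymbol{A}x)=\bigl(\sqrt{\alpha_1}U_1(x),\ldots,\sqrt{\alpha_p}U_p(x),\sqrt{\beta_1}T_1(Ax),\ldots,\sqrt{\beta_r}T_r(Ax)\bigr),
\]
since composing with $\boldsymbol{A}^{\ast}$ then produces
\[
\boldsymbol{A}^{\ast}(\boldsymbol{T}-\boldsymbol{I})(\boldsymbol{A}x)=\sum_{i=1}^{p}\alpha_i(U_i-I)(x)+\sum_{j=1}^{r}\beta_j A^{\ast}(T_j-I)(Ax),
\]
so that the product-space iteration collapses exactly onto the iterative step of Algorithm~\ref{Nirits alg}. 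The same computation gives $\boldsymbol{A}^{\ast}\boldsymbol{A}=\bigl(\sum_{i=1}^{p}\alpha_i\bigr)I+\bigl(\sum_{j=1}^{r}\beta_j\bigr)A^{\ast}A$, whose spectral radius equals $L=\sum_{i=1}^{p}\alpha_i+\sum_{j=1}^{r}\beta_j\Vert A\Vert^{2}$; this is precisely the constant that fixes the admissible range $\gamma\in(0,1/L)$ in the statement.

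It then remains to check that the hypotheses of Theorem~\ref{Theorem1} hold for the product data. Since each $f_i$ and $g_j$ is $\alpha$-ISM, the diagonal operator $\boldsymbol{G}$ is $\alpha$-ISM on $\boldsymbol{W}_2$, and because $\lambda\in[0,2\alpha]$ Lemma~\ref{lemma:Mod-proj}(i) makes $\boldsymbol{T}$ nonexpansive; on the domain side $\boldsymbol{U}=I$ is trivially firmly quasi-nonexpansive and $\boldsymbol{F}=\boldsymbol{0}$ renders condition~(\ref{eq:2.24}) vacuous. Conditions~(\ref{eq:cond1})--(\ref{eq:cond2}) are the componentwise form of the VIP hypothesis in Theorem~\ref{Theorem1} and, via Lemma~\ref{lemma:Mod-proj}, guarantee that each $U_i$ and $T_j$, hence the block operator $\boldsymbol{T}$, enjoys the firmly quasi-nonexpansive properties used in the convergence argument. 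Finally, the fixed-point characterization~(\ref{eq:fix-vip}) together with the block structure yields that $x\in\Psi$ if and only if $\boldsymbol{A}x\in\operatorname*{Fix}(\boldsymbol{T})=SOL(\boldsymbol{Q},\boldsymbol{G})$; in particular $\Psi\neq\emptyset$ is equivalent to nonemptiness of the product solution set, and a weak cluster point of the iterates solves the MSSVIP exactly when it solves the product SVIP. Theorem~\ref{Theorem1} then delivers weak convergence of $\{x^{k}\}$ to some $x^{\ast}\in\Psi$.

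The step I expect to demand the most care is the scaling bookkeeping hidden in the weights $\sqrt{\alpha_i}$ and $\sqrt{\beta_j}$. One must arrange the scalings in $\boldsymbol{A}$, $\boldsymbol{Q}$ and $\boldsymbol{G}$ so consistently that the block operator built from the unscaled maps $U_i=P_{C_i}(I-\lambda f_i)$ and $T_j=P_{Q_j}(I-\lambda g_j)$ genuinely coincides with $P_{\boldsymbol{Q}}(I-\lambda\boldsymbol{G})$ and reproduces the block relation above, and that under these scalings both the $\alpha$-ISM constant of $\boldsymbol{G}$ and the correspondence, along $\boldsymbol{A}$, between $\operatorname*{Fix}(\boldsymbol{T})$ and the solution set $\Psi$ come out correctly. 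Once this identification is pinned down, the conclusion is a direct appeal to Theorem~\ref{Theorem1}.
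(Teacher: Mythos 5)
Your proposal follows exactly the paper's route: the paper's own proof is a one-line appeal to Theorem~\ref{Theorem1} applied to the two-operator SVIP in the product space $\boldsymbol{W}_{2}$ with $U=\boldsymbol{I}$ and $T=\boldsymbol{T}$, relying on the setup (\ref{eq:prod})--(\ref{eq:fat}) and the translation to Algorithm~\ref{Nirits alg} developed just before the theorem. Your additional verifications (the block relation, the computation of $L$, and the scaling bookkeeping for the weights $\sqrt{\alpha_{i}}$ and $\sqrt{\beta_{j}}$) are precisely the details the paper leaves implicit, so the argument is correct and essentially identical in approach.
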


\begin{proof}
Apply Theorem \ref{Theorem1} to the two-operator SVIP in the product space
setting with $U=\boldsymbol{I}:H_{1}\rightarrow H_{1}$, $\operatorname*{Fix}%
U=\boldsymbol{C}\mathbf{,}$ $T=\boldsymbol{T}:\boldsymbol{W}\rightarrow
\boldsymbol{W}\mathbf{,}$ and $\operatorname*{Fix}T=\boldsymbol{Q}$.
\end{proof}

\begin{remark}
Observe that conditions (\ref{eq:cond1}) and (\ref{eq:cond2}) imposed on
$U_{i}$ and $T_{j}$ for $i=1,2,\ldots,p$ and $j=1,2,\ldots,r$, respectively,
in Theorem \ref{Theorem1*}, which are necessary for our treatment of the
problem in a product space, ensure that these operators are firmly
quasi-nonexpansive (FQNE). Therefore, the SVIP under these conditions may be
considered a \texttt{Split Common Fixed Point Problem} (SCFPP), first
introduced in \cite{CS08a}, with $\boldsymbol{C}\mathbf{,}$ $\boldsymbol{Q}%
\mathbf{,}$ $\boldsymbol{A}$ and $\boldsymbol{T}:\boldsymbol{W}_{2}%
\rightarrow\boldsymbol{W}_{2}$ as above, and the identity operator
$\boldsymbol{I}:\boldsymbol{C}\rightarrow\boldsymbol{C}$.\textbf{ }Therefore,
we could also apply \cite[Algorithm 4.1]{CS08a}. If, however, we drop these
conditions, then the operators are nonexpansive, by Lemma \ref{lemma:Mod-proj}%
(i), and the result of \cite{Moudafi} would apply.
\end{remark}

\section{Applications\label{sec:applications}}

The following problems are special cases of the SVIP. They are listed here
because their analysis can benefit from our algorithms for the SVIP and
because known algorithms for their solution may be generalized in the future
to cover the more general SVIP. The list includes known problems such as the
Split Feasibility Problem (SFP) and the Convex Feasibility Problem\textit{
}(CFP). In addition, we introduce two new \textquotedblleft
split\textquotedblright\ problems that have, to the best of our knowledge,
never been studied before. These are the Common Solutions to Variational
Inequalities Problem (CSVIP) and the Split Zeros Problem (SZP).

\subsection{The split feasibility and convex feasibility problems}

The Split Feasibility Problem (SFP) in Euclidean space is formulated as
follows:
\begin{equation}
\text{find a point }x^{\ast}\text{ such that }x^{\ast}\in C\subseteq
R^{n}\text{ and }Ax^{\ast}\in Q\subseteq R^{m}, \label{eq:sfp}%
\end{equation}
where $C\subseteq R^{n},$ $Q\subseteq R^{m}$ are nonempty, closed and convex
sets, and $A:R^{n}\rightarrow R^{m}$ is given. Originally introduced in Censor
and Elfving \cite{CE}, it was later used in the area of intensity-modulated
radiation therapy (IMRT) treatment planning; see \cite{CEKB,CBMT}. Obviously,
it is formally a special case of the SVIP obtained from (\ref{eq:vip}%
)--(\ref{eq:svip}) by setting $f\equiv g\equiv0.$ The Convex Feasibility
Problem\textit{ }(CFP) in a Euclidean space\textit{ }is:
\begin{equation}
\text{find a point }x^{\ast}\text{ such that }x^{\ast}\in\cap_{i=1}^{m}%
C_{i}\neq\emptyset,
\end{equation}
where $C_{i},$ $i=1,2,\ldots,m,$ are nonempty, closed and convex sets in
$R^{n}.$ This, in its turn, becomes a special case of the SFP by taking in
(\ref{eq:sfp}) $n=m,$ $A=I$ $Q=R^{n}$ and $C=\cap_{i=1}^{m}C_{i}.$ Many
algorithms for solving the CFP have been developed; see, e.g., \cite{BB96,
CZ97}. Byrne \cite{Byrne} established an algorithm for solving the
SFP\textit{, }called the CQ-Algorithm, with the following iterative step:%
\begin{equation}
x^{k+1}=P_{C}\left(  x^{k}+\gamma A^{t}(P_{Q}-I)Ax^{k}\right)  ,
\label{eq:cq-alg}%
\end{equation}
which does not require calculation of the inverse of the operator $A,$ as in
\cite{CE}, but needs only its transpose $A^{t}$. A recent excellent paper on
the multiple-sets SFP which contains many references that reflect the
state-of-the-art in this area is \cite{lopezetal10}.

It is of interest to note that looking at the SFP from the point of view of
the SVIP enables us to find the minimum-norm solution of the SFP, i.e., a
solution of the form%
\begin{equation}
x^{\ast}=\operatorname{argmin}\{\Vert x\Vert\mid x\text{ solves the SFP
(\ref{eq:sfp})}\}.
\end{equation}
This is done, and easily verified, by solving (\ref{eq:vip})--(\ref{eq:svip})
with $f=I$ and $g\equiv0.$

\subsection{The common solutions to variational inequalities problem}

The Common Solutions to Variational Inequalities Problem (CSVIP), newly
introduced here, is defined in Euclidean space as follows. Let $\left\{
f_{i}\right\}  _{i=1}^{m}\ $be a family of functions from $R^{n}$ into itself
and let $\left\{  C_{i}\right\}  _{i=1}^{m}$ be nonempty, closed and convex
subsets of $R^{n}$ with $\cap_{i=1}^{m}C_{i}\neq\emptyset$. The CSVIP is
formulated as follows:
\begin{align}
\text{find a point }x^{\ast}  &  \in\cap_{i=1}^{m}C_{i}\text{ such that
}\left\langle f_{i}(x^{\ast}),x-x^{\ast}\right\rangle \geq0\text{ }\nonumber\\
\text{for all }x  &  \in C_{i}\text{, }i=1,2,\ldots,m.
\end{align}
This problem can be transformed into a CVIP in an appropriate product space
(different from the one in Section \ref{sec:SVIP}). Let $R^{mn}$ be the
product space and define $\boldsymbol{F}:R^{mn}\rightarrow R^{mn}$ by%

\begin{equation}
\boldsymbol{F}\left(  (x^{1},x^{2},\ldots,x^{m})\right)  =(f_{1}(x^{1}%
),\ldots,f_{m}(x^{m})),
\end{equation}
where $x^{i}\in R^{n}$ for all $i=1,2,\ldots,m.$ Let the diagonal set in
$R^{mn}$ be
\begin{equation}
\boldsymbol{\Delta}:=\{\boldsymbol{x}\in R^{mn}\mid\boldsymbol{x}%
\mathbf{=}(a,a,\ldots,a),\text{ }a\in R^{n}\}
\end{equation}
and define the product set%
\begin{equation}
\boldsymbol{C}:=\Pi_{i=1}^{m}C_{i}.
\end{equation}
The CSVIP in $R^{n}$ is equivalent to the following CVIP in $R^{mn}$:%
\begin{align}
\text{find a point }\boldsymbol{x}^{\ast}  &  \in\boldsymbol{C}\cap
\boldsymbol{\Delta}\text{ such that }\left\langle \boldsymbol{F}%
(\boldsymbol{x}^{\ast}),\boldsymbol{x-x}^{\ast}\right\rangle \geq0\text{
}\nonumber\\
\text{for all }\boldsymbol{x}  &  =(x^{1},x^{2},\ldots,x^{m})\in
\boldsymbol{C}.
\end{align}
So, this problem can be solved by using Algorithm \ref{alg:SubExt4SVIP} with
$\Omega=\boldsymbol{\Delta}.$ A new algorithm specifically designed for the
CSVIP appears in \cite{cgrs10}.

\subsection{The split minimization and the split zeros problems}

From optimality conditions for convex optimization (see, e.g., Bertsekas and
Tsitsiklis \cite[Proposition 3.1, page 210]{BT}) it is well-known that if
$F:R^{n}\rightarrow R^{n}$ is a continuously differentiable convex function on
a closed and convex subset $X\subseteq R^{n},$ then $x^{\ast}\in X$ minimizes
$F$ over $X$ if and only if%
\begin{equation}
\langle\nabla F(x^{\ast}),x-x^{\ast}\rangle\geq0\text{ for all }x\in X,
\label{eq:bert}%
\end{equation}
where $\nabla F$ is the gradient of $F$. Since (\ref{eq:bert}) is a VIP, we
make the following observation. If $F:R^{n}\rightarrow R^{n}$ and
$G:R^{m}\rightarrow R^{m}$ are continuously differentiable convex functions on
closed and convex subsets $C\subseteq R^{n}$ and $Q\subseteq R^{m},$
respectively, and if in the SVIP we take $f=\nabla F$ and $g=\nabla G,$ then
we obtain the following \textit{Split Minimization Problem }(SMP):%

\begin{gather}
\text{find a point }x^{\ast}\in C\text{ such that }x^{\ast}%
=\operatorname{argmin}\{f(x)\mid x\in C\}\\
\text{and such that}\nonumber\\
\text{the point }y^{\ast}=Ax^{\ast}\in Q\text{ and solves }y^{\ast
}=\operatorname{argmin}\{g(y)\mid y\in Q\}.
\end{gather}

The \textit{Split Zeros Problem} (SZP), newly introduced here, is defined as
follows. Let $H_{1}$ and $H_{2}$ be two Hilbert spaces. Given operators
$B_{1}:H_{1}\rightarrow H_{1}$ and $B_{2}:H_{2}\rightarrow H_{2},$ and a
bounded linear operator $A:H_{1}\rightarrow H_{2}$, the SZP is formulated as
follows:%
\begin{equation}
\text{find a point }x^{\ast}\in H_{1}\text{ such that }B_{1}(x^{\ast})=0\text{
and }B_{2}(Ax^{\ast})=0. \label{eq:SZP}%
\end{equation}
This problem is a special\ case of the SVIP if $A$ is a surjective operator.
To see this, take in (\ref{eq:vip})--(\ref{eq:svip}) $C=H_{1}$, $Q=H_{2},$
$f=B_{1}$ and $g=B_{2},$ and choose $x:=x^{\ast}-B_{1}(x^{\ast})\in H_{1}$ in
(\ref{eq:vip}) and $x\in H_{1}$ such that $Ax:=Ax^{\ast}-B_{2}(Ax^{\ast})\in
H_{2}$ in (\ref{eq:svip}).

The next lemma shows when the only solution of an SVIP is a solution of an
SZP. It extends a similar result concerning the relationship between the
(un-split) zero finding problem and the VIP.

\begin{lemma}
Let $H_{1}$ and $H_{2}$ be real Hilbert spaces, and $C\subseteq H_{1}$ and
$Q\subseteq H_{2}$ nonempty, closed and convex subsets. Let $B_{1}%
:H_{1}\rightarrow H_{1}$ and $B_{2}:H_{2}\rightarrow H_{2}$ be $\alpha$-ISM
operators and let $A:H_{1}\rightarrow H_{2}$ be a bounded linear operator.
Assume that $C\cap\{x\in H_{1}\mid B_{1}(x)=0\}\neq\emptyset$ and that
$Q\cap\{y\in H_{2}\mid B_{2}(y)=0\}\neq\emptyset$, and denote%
\begin{equation}
\Gamma:=\Gamma(C,Q,B_{1},B_{2},A):=\left\{  z\in SOL(C,B_{1})\mid Az\in
SOL(Q,B_{2})\right\}  .
\end{equation}
Then, for any $x^{\ast}\in C$ with $Ax^{\ast}\in Q,$ $x^{\ast}$ solves
(\ref{eq:SZP}) if and only if $x^{\ast}\in\Gamma$.
\end{lemma}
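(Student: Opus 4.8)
The plan is to prove both directions of the equivalence by exploiting the fact that for an $\alpha$-ISM operator $B$ on a convex set $D$, membership in $SOL(D,B)$ can be characterized through the variational inequality, and that having a zero of $B$ inside $D$ is the strongest possible way to solve that VIP. The core observation I would isolate first is the following elementary sublemma, applied separately to $(C,B_1)$ and to $(Q,B_2)$: if $B$ is monotone (in particular $\alpha$-ISM) on a closed convex set $D$ and $w^{\ast}\in D$ satisfies $B(w^{\ast})=0$, then trivially $\langle B(w^{\ast}),w-w^{\ast}\rangle = 0 \geq 0$ for all $w\in D$, so $w^{\ast}\in SOL(D,B)$. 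Conversely, if $w^{\ast}\in SOL(D,B)$ \emph{and} there exists some $\bar{w}\in D$ with $B(\bar{w})=0$ (which is exactly the nonemptiness hypothesis), then I can use monotonicity to force $B(w^{\ast})=0$.

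First I would prove the easy direction. Suppose $x^{\ast}\in C$ with $Ax^{\ast}\in Q$ solves the SZP \eqref{eq:SZP}, i.e. $B_1(x^{\ast})=0$ and $B_2(Ax^{\ast})=0$. Then by the sublemma applied to $(C,B_1)$ we get $x^{\ast}\in SOL(C,B_1)$, and applied to $(Q,B_2)$ with the point $Ax^{\ast}\in Q$ we get $Ax^{\ast}\in SOL(Q,B_2)$. By the definition of $\Gamma$ this means $x^{\ast}\in\Gamma$.

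For the reverse direction, suppose $x^{\ast}\in C$ with $Ax^{\ast}\in Q$ and $x^{\ast}\in\Gamma$, so that $x^{\ast}\in SOL(C,B_1)$ and $Ax^{\ast}\in SOL(Q,B_2)$. I must deduce $B_1(x^{\ast})=0$ and $B_2(Ax^{\ast})=0$. Focusing on the first equality, let $\bar{x}\in C$ be a zero of $B_1$, guaranteed by the hypothesis $C\cap\{x\mid B_1(x)=0\}\neq\emptyset$. From $x^{\ast}\in SOL(C,B_1)$ with the test point $x=\bar{x}$ I get $\langle B_1(x^{\ast}),\bar{x}-x^{\ast}\rangle\geq 0$. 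The ISM inequality with the pair $(x^{\ast},\bar{x})$ gives $\langle B_1(x^{\ast})-B_1(\bar{x}),x^{\ast}-\bar{x}\rangle\geq\alpha\Vert B_1(x^{\ast})-B_1(\bar{x})\Vert^2$; since $B_1(\bar{x})=0$ this reads $\langle B_1(x^{\ast}),x^{\ast}-\bar{x}\rangle\geq\alpha\Vert B_1(x^{\ast})\Vert^2\geq 0$. Adding this to the variational inequality above yields $0\geq\alpha\Vert B_1(x^{\ast})\Vert^2\geq 0$, forcing $B_1(x^{\ast})=0$. The identical argument applied to $(Q,B_2)$, using a zero $\bar{y}\in Q$ of $B_2$ and the test point $\bar{y}$ against $Ax^{\ast}\in SOL(Q,B_2)$, gives $B_2(Ax^{\ast})=0$. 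Hence $x^{\ast}$ solves \eqref{eq:SZP}.

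I expect the reverse direction to be the only substantive step, and the pivotal trick there is the sign combination: the VIP gives an inequality pointing one way at the test point $\bar{x}$, while ISM applied at a \emph{zero} of the operator gives a matching inequality pointing the other way, and the two collapse the norm to zero. The nonemptiness hypotheses on $C\cap\{B_1=0\}$ and $Q\cap\{B_2=0\}$ are precisely what supply the zero needed as the test point, so no part of the argument is wasted. Everything else is immediate from the definitions, so this should be a short proof with no genuine obstacle beyond correctly bookkeeping the two inner-product inequalities.
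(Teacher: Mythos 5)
Your proof is correct, but it takes a genuinely different route from the paper's. The paper proves the nontrivial direction by passing through the operator $P_{C}(I-\lambda B_{1})$: it invokes the fixed-point characterization $x^{\ast}=P_{C}(I-\lambda B_{1})(x^{\ast})$ from (\ref{eq:fix-vip}), the projection inequality (\ref{eq:ProjP2}) applied at the point $(I-\lambda B_{1})(x^{\ast})$, and the nonexpansiveness of $I-\lambda B_{1}$ at a fixed point $q_{1}\in C\cap\operatorname*{Fix}(I-\lambda B_{1})$ (which is exactly a zero of $B_{1}$ in $C$), and then squeezes $\left\Vert (I-\lambda B_{1})(x^{\ast})-x^{\ast}\right\Vert ^{2}=0$, concluding since $\lambda>0$. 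You instead work directly with the two defining inequalities: the VIP at the test point $\bar{x}$ gives $\langle B_{1}(x^{\ast}),x^{\ast}-\bar{x}\rangle\leq0$, while the ISM inequality against the zero $\bar{x}$ gives $\langle B_{1}(x^{\ast}),x^{\ast}-\bar{x}\rangle\geq\alpha\Vert B_{1}(x^{\ast})\Vert^{2}$, and the two collapse to $B_{1}(x^{\ast})=0$ because $\alpha>0$. Both arguments consume the nonemptiness hypotheses in the same place (to supply the zeros $\bar{x}$ and $\bar{y}$), but yours avoids the auxiliary parameter $\lambda$, the projection machinery, and the nonexpansiveness lemma entirely, so it is shorter and more self-contained; the paper's version has the minor virtue of reusing lemmas already established for the convergence analysis. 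One small caveat: your opening sublemma says the converse direction follows from ``monotonicity,'' but plain monotonicity would only yield $\langle B_{1}(x^{\ast}),x^{\ast}-\bar{x}\rangle=0$, not $B_{1}(x^{\ast})=0$; your detailed argument correctly uses the full strength of $\alpha$-ISM with $\alpha>0$, so this is only a matter of phrasing.
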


\begin{proof}
First assume that $x^{\ast}\in C$ with $Ax^{\ast}\in Q$ and that $x^{\ast}$
solves (\ref{eq:SZP}). Then it is clear that $x^{\ast}\in\Gamma.$ In the other
direction, assume that $x^{\ast}\in C$ with $Ax^{\ast}\in Q$ and that
$x^{\ast}\in\Gamma.$ Applying (\ref{eq:ProjP2}) with $C$ as $D$ there,
$(I-\lambda B_{1})\left(  x^{\ast}\right)  \in H_{1},$ for any $\lambda
\in(0,2\alpha]$, as $x$ there, and $q_{1}\in C\cap\operatorname*{Fix}%
(I-\lambda B_{1}),$ with the same $\lambda,$ as $y$ there, we get%
\begin{align}
&  \left\Vert q_{1}-P_{C}(I-\lambda B_{1})\left(  x^{\ast}\right)  \right\Vert
^{2}+\left\Vert (I-\lambda B_{1})\left(  x^{\ast}\right)  -P_{C}(I-\lambda
B_{1})\left(  x^{\ast}\right)  \right\Vert ^{2}\nonumber\\
&  \leq\left\Vert (I-\lambda B_{1})\left(  x^{\ast}\right)  -q_{1}\right\Vert
^{2},
\end{align}
and, similarly, applying (\ref{eq:ProjP2}) again, we obtain%
\begin{align}
&  \left\Vert q_{2}-P_{Q}(I-\lambda B_{2})\left(  Ax^{\ast}\right)
\right\Vert ^{2}+\left\Vert (I-\lambda B_{2})\left(  Ax^{\ast}\right)
-P_{Q}(I-\lambda B_{2})\left(  Ax^{\ast}\right)  \right\Vert ^{2}\nonumber\\
&  \leq\left\Vert (I-\lambda B_{2})\left(  Ax^{\ast}\right)  -q_{2}\right\Vert
^{2}.
\end{align}
Using the characterization of (\ref{eq:fix-vip}), we get%
\begin{equation}
\left\Vert q_{1}-x^{\ast}\right\Vert ^{2}+\left\Vert (I-\lambda B_{1})\left(
x^{\ast}\right)  -x^{\ast}\right\Vert ^{2}\leq\left\Vert (I-\lambda
B_{1})\left(  x^{\ast}\right)  -q_{1}\right\Vert ^{2}%
\end{equation}
and%
\begin{equation}
\left\Vert q_{2}-Ax^{\ast}\right\Vert ^{2}+\left\Vert (I-\lambda B_{2})\left(
Ax^{\ast}\right)  -x^{\ast}\right\Vert ^{2}\leq\left\Vert (I-\lambda
B_{2})\left(  Ax^{\ast}\right)  -q_{2}\right\Vert ^{2}.
\end{equation}

It can be seen from the proof of Lemma \ref{lemma:Mod-proj}(i) that the
operators $I-\lambda B_{1}$ and $I-\lambda B_{2}$ are nonexpansive for every
$\lambda\in\lbrack0,2\alpha]$, so with $q_{1}\in C\cap\operatorname*{Fix}%
(I-\lambda B_{1})$ and $q_{2}\in Q\cap\operatorname*{Fix}(I-\lambda B_{2}),$%
\begin{equation}
\left\Vert (I-\lambda B_{1})\left(  x^{\ast}\right)  -q_{1}\right\Vert
^{2}\leq\left\Vert x^{\ast}-q_{1}\right\Vert ^{2}%
\end{equation}
and%
\begin{equation}
\left\Vert (I-\lambda B_{2})\left(  Ax^{\ast}\right)  -q_{2}\right\Vert
^{2}\leq\left\Vert Ax^{\ast}-q_{2}\right\Vert ^{2}.
\end{equation}
Combining the above inequalities, we obtain%
\begin{equation}
\left\Vert q_{1}-x^{\ast}\right\Vert ^{2}+\left\Vert (I-\lambda B_{1})\left(
x^{\ast}\right)  -x^{\ast}\right\Vert ^{2}\leq\left\Vert x^{\ast}%
-q_{1}\right\Vert ^{2}%
\end{equation}
and%
\begin{equation}
\left\Vert q_{2}-Ax^{\ast}\right\Vert ^{2}+\left\Vert (I-\lambda B_{2})\left(
Ax^{\ast}\right)  -x^{\ast}\right\Vert ^{2}\leq\left\Vert Ax^{\ast}%
-q_{2}\right\Vert ^{2}.
\end{equation}
Hence, $\left\Vert (I-\lambda B_{1})\left(  x^{\ast}\right)  -x^{\ast
}\right\Vert ^{2}=0$ and $\left\Vert (I-\lambda B_{2})\left(  Ax^{\ast
}\right)  -Ax^{\ast}\right\Vert ^{2}=0.$ Since $\lambda>0,$ we get that
$B_{1}(x^{\ast})=0$ and $B_{2}(Ax^{\ast})=0$, as claimed$\medskip$
\end{proof}

\textbf{Acknowledgments}. This work was partially supported by a United
States-Israel Binational Science Foundation (BSF) Grant number 200912, by US
Department of Army award number W81XWH-10-1-0170, by Israel Science Foundation
(ISF) Grant number 647/07, by the Fund for the Promotion of Research at the
Technion and by the Technion President's Research Fund.\bigskip


\begin{thebibliography}{99}                                                                                               %


\bibitem {BB96}H. H. Bauschke and J. M. Borwein, On projection algorithms for
solving convex feasibility problems, \textit{SIAM Review} \textbf{38} (1996), 367--426.

\bibitem {BC01}H. H. Bauschke and P. L. Combettes, A weak-to-strong
convergence principle for Fej\'{e}r-monotone methods in Hilbert
spaces,\textit{ Mathematics of Operations Research} \textbf{26 }(2001), 248--264.

\bibitem {BT}D. P. Bertsekas and J. N. Tsitsiklis, \textit{Parallel and
Distributed Computation: Numerical Methods}, Prentice-Hall International,
Englwood Cliffs, NJ, USA, 1989.

\bibitem {Browder}F. E. Browder, Fixed point theorems for noncompact mappings
in Hilbert space, \textit{Proceedings of the National Academy of Sciences USA}
\textbf{53 (}1965), 1272--1276.

\bibitem {Byrne}C. L. Byrne, Iterative projection onto convex sets using
multiple Bregman distances, \textit{Inverse Problems}\textbf{ 15} (1999), 1295--1313.

\bibitem {byrne02}C. Byrne, Iterative oblique projection onto convex sets and
the split feasibility problem, \textit{Inverse Problems} \textbf{18 }(2002), 441--453.

\bibitem {byrne04}C. Byrne, A unified treatment of some iterative algorithms
in signal processing and image reconstruction, \textit{Inverse Problems
}\textbf{20} (2004), 103--120.

\bibitem {Ceg08}A. Cegielski, Generalized relaxations of nonexpansive
operators and convex feasibility problems, \textit{Contemporary Mathematics}
\textbf{513} (2010), 111--123.

\bibitem {cc11}A. Cegielski and Y. Censor, Opial-type theorems and the common
fixed point problem, in: H. Bauschke, R. Burachik, P. Combettes, V. Elser, R.
Luke and H. Wolkowicz (Editors), \textit{Fixed-Point Algorithms for Inverse
Problems in Science and Engineering}, Springer-Verlag, New York, NY, USA,
2011, pp. 155--183.

\bibitem {cap88}Y. Censor, M. D. Altschuler and W. D. Powlis, On the use of
Cimmino's simultaneous projections method for computing a solution of the
inverse problem in radiation therapy treatment planning, \textit{Inverse
Problems} \textbf{4} (1988), 607--623.

\bibitem {CBMT}Y. Censor, T. Bortfeld, B. Martin and A. Trofimov, A unified
approach for inversion problems in intensity-modulated radiation therapy,
\textit{Physics in Medicine and Biology} \textbf{51 }(2006), 2353--2365.

\bibitem {cccdh11}Y. Censor, W. Chen, P. L. Combettes, R. Davidi and G. T.
Herman, On the effectiveness of projection methods for convex feasibility
problems with linear inequality constraints, \textit{Computational
Optimization and Applications}, accepted for publication, DOI:
10.1007/s10589-011-9401-7, Online First http://arxiv.org/abs/0912.4367.

\bibitem {cdh10}Y. Censor, R. Davidi and G. T. Herman, Perturbation resilience
and superiorization of iterative algorithms, \textit{Inverse Problems}
\textbf{26 }(2010)\textbf{,} 065008 (pp. 17).

\bibitem {CE}Y. Censor and T. Elfving , A multiprojection algorithm using
Bregman projections in product space, \textit{Numerical Algorithms }\textbf{8}
(1994), 221--239.

\bibitem {CEKB}Y. Censor, T. Elfving, N. Kopf and T. Bortfeld, The
multiple-sets split feasibility problem and its applications for inverse
problems, \textit{Inverse Problems} \textbf{21} (2005), 2071--2084.

\bibitem {CGR}Y. Censor, A. Gibali and S. Reich, Extensions of Korpelevich's
extragradient method for solving the variational inequality problem in
Euclidean space, \textit{Optimization}, accepted for publication.

\bibitem {CGR2}Y. Censor, A. Gibali and S. Reich, The subgradient
extragradient method for solving the variational inequality problem in Hilbert
space, \textit{Journal of Optimization Theory and Applications} \textbf{148}
(2011), 318--335.

\bibitem {cgr-oms}Y. Censor, A. Gibali and S. Reich, Strong convergence of
subgradient extragradient methods for the variational inequality problem in
Hilbert space, \textit{Optimization Methods and Software}, accepted for publication.

\bibitem {cgrs10}Y. Censor, A. Gibali, S. Reich and S. Sabach, Common
solutions to variational inequalities, Technical Report, April 5, 2011.
Revised: July 18, 2011.

\bibitem {CS08a}Y. Censor and A. Segal, The split common fixed point problem
for directed operators, \textit{Journal of Convex Analysis} \textbf{16}
(2009), 587--600.

\bibitem {CS08}Y. Censor and A. Segal, On the string averaging method for
sparse common fixed point problems, \textit{International Transactions in
Operational Research} \textbf{16 }(2009), 481--494.

\bibitem {CS09}Y. Censor and A. Segal, On string-averaging for sparse problems
and on the split common fixed point problem, \textit{Contemporary Mathematics}
\textbf{513} (2010), 125--142.

\bibitem {CZ97}Y. Censor and S. A. Zenios, \textit{Parallel Optimization:
Theory, Algorithms, and Applications}, Oxford University Press, New York, NY,
USA, 1997.

\bibitem {Co}P. L. Combettes, Quasi-Fej\'{e}rian analysis of some optimization
algorithms, in: D. Butnariu, Y. Censor and S. Reich (Editors),
\textit{Inherently Parallel Algorithms in Feasibility and Optimization and
Their Applications}, Elsevier Science Publishers, Amsterdam, The Netherlands,
2001, pp. 115--152.

\bibitem {Crombez}G. Crombez, A geometrical look at iterative methods for
operators with fixed points, \textit{Numerical Functional Analysis and
Optimization }\textbf{26} (2005), 157--175.

\bibitem {Crombez06}G. Crombez, A hierarchical presentation of operators with
fixed points on Hilbert spaces, \textit{Numerical Functional Analysis and
Optimization }\textbf{27} (2006), 259--277.

\bibitem {Dan-Gao}Y. Dang and Y. Gao, The strong convergence of a KM--CQ-like
algorithm for a split feasibility problem, \textit{Inverse Problems}
\textbf{27 }(2011), 015007.

\bibitem {Goebel+Reich}K. Goebel and S. Reich, \textit{Uniform Convexity,
Hyperbolic Geometry, and Nonexpansive Mappings}, Marcel Dekker, New York and
Basel, 1984.

\bibitem {hyd10}S. He, C. Yang and P. Duan, Realization of the hybrid method
for Mann iteration, \textit{Applied Mathematics and Computation }\textbf{217}
(2010), 4239--4247.

\bibitem {Korpelevich}G. M. Korpelevich, The extragradient method for finding
saddle points and other problems, \textit{Ekonomika i Matematicheskie Metody}
\textbf{12} (1976), 747--756.

\bibitem {lopezetal10}G. L\'{o}pez, V. Mart\'{\i}n-M\'{a}rquez and H.-K. Xu,
Iterative algorithms for the multiple-sets split feasibility problem, in: Y.
Censor, M. Jiang and G. Wang (Editors), \textit{Biomedical Mathematics:
Promising Directions in Imaging, Therapy Planning and Inverse Problems},
Medical Physics Publishing, Madison, WI, USA, 2010, pp. 243--279.

\bibitem {mp08}\c{S}. M\u{a}ru\c{s}ter and C. Popirlan, On the Mann-type
iteration and the convex feasibility problem, \textit{Journal of Computational
and Applied Mathematics} \textbf{212} (2008), 390--396.

\bibitem {Masad+Reich}E. Masad and S. Reich, A note on the multiple-set split
convex feasibility problem in Hilbert space, \textit{Journal of Nonlinear and
Convex Analysis} \textbf{8} (2007), 367--371.

\bibitem {Moudafi}A. Moudafi, The split common fixed-point problem for
demicontractive mappings, \textit{Inverse Problems} \textbf{26} (2010), 1--6.

\bibitem {Nadezhkina}N. Nadezhkina and W. Takahashi, Weak convergence\ theorem
by an extragradient method for nonexpansive mappings and monotone
mappings\textit{, Journal of Optimization Theory and Applications} \textbf{128
(}2006), 191--201\textbf{.}

\bibitem {Opial}Z. Opial, Weak convergence of the sequence of successive
approximations for nonexpansive mappings, \textit{Bulletin of the American
Mathematical Society} \textbf{73} (1967), 591--597.

\bibitem {Pierra}G. Pierra, Decomposition through formalization in a product
space, \textit{Mathematical Programming }\textbf{28} (1984), 96--115.

\bibitem {qx05}B. Qu and N. Xiu, A note on the CQ algorithm for the split
feasibility problem, \textit{Inverse Problems} \textbf{21 }(2005), 1655--1666.

\bibitem {Rockafellar76}R. T. Rockafellar, On the maximality of sums of
nonlinear monotone operators, \textit{Transactions of the American
Mathematical Society} \textbf{149 }(1970), 75--88.

\bibitem {ssl08}F. Sch\"{o}pfer, T. Schuster and A. K. Louis, An iterative
regularization method for the solution of the split feasibility problem in
Banach spaces, \textit{Inverse Problems} \textbf{24 }(2008), 055008.

\bibitem {Seg08}A. Segal, \textit{Directed Operators for Common Fixed Point
Problems and Convex Programming Problems}, Ph.D. Thesis, University of Haifa,
September 2008.

\bibitem {Takahashi}W. Takahashi and M. Toyoda, Weak convergence\ theorems for
nonexpansive mappings and monotone mappings, \textit{Journal of Optimization
Theory and Applications} \textbf{118 (}2003), 417--428\textbf{.}

\bibitem {xu06}H. K. Xu, A variable Krasnosel'skii--Mann algorithm and the
multiple-set split feasibility problem, \textit{Inverse Problems} \textbf{22
}(2006), 2021--2034.

\bibitem {Xu}H. K. Xu, Iterative methods for the split feasibility problem in
infinite-dimensional Hilbert spaces, \textit{Inverse Problems} \textbf{26
}(2010), 105018.

\bibitem {Yamada}I. Yamada and N. Ogura, Adaptive projected subgradient method
for asymptotic minimization of sequence of nonnegative convex functions,
\textit{Numerical Functional Analysis and Optimization} \textbf{25 }(2005), 593--617.

\bibitem {yang04}Q. Yang, The relaxed CQ algorithm solving the split
feasibility problem, \textit{Inverse Problems} \textbf{20} (2004), 1261--1266.

\bibitem {Z}M. Zaknoon, \textit{Algorithmic Developments for the Convex
Feasibility Problem}, Ph.D. Thesis, University of Haifa, April 2003.

\bibitem {zhl09}W. Zhang, D. Han and Z. Li, A self-adaptive projection method
for solving the multiple-sets split feasibility problem, \textit{Inverse
Problems} \textbf{25 }(2009), 115001.

\bibitem {zy05}J. Zhao and Q. Yang, Several solution methods for the split
feasibility problem, \textit{Inverse Problems} \textbf{21 }(2005), 1791--1800.
\end{thebibliography}
\end{document}